\documentclass{amsart}

\usepackage{amsmath}
\usepackage{amssymb}
\usepackage{graphicx}
\usepackage[colorinlistoftodos]{todonotes}
\usepackage[colorlinks=true, allcolors=blue]{hyperref}
\usepackage{tikz}
\usepackage{endnotes}
\usepackage{mathtools}
\usepackage{multirow}
\usepackage{float}
\usepackage{lipsum}
\usepackage{esint}
\usepackage{color}
\usepackage{mathrsfs}

\theoremstyle{definition}

 \newcommand{\comments}[1]{}

\newcommand{\Sp}{\mathbb S}
\newcommand{\R}{\mathbb{R}}

\newcommand{\N}{\mathbb{N}}

\newcommand{\Si}{\mathcal{S}}

\newcommand{\Ord}{\text{Ord}}

\newcommand{\Deltamod}{\Delta_{\text{mod}}}

\newtheorem{theorem}{Theorem}[section]
\newtheorem{question}{Question}[section]

\newtheorem{lemma}[theorem]{Lemma}
\newtheorem{remark}[theorem]{Remark}

\newtheorem{definition}[theorem]{Definition}

\numberwithin{equation}{section}

\title{On the Possible Orders of Harmonic Maps into Euclidean Buildings}

\thanks{
CB supported in part by NSF DMS CAREER-1750254 and by a Simons Fellowship SFI-MPS-SFM-00011728}

\author[Breiner]{Christine Breiner}
\address{Brown University\\
Department of Mathematics\\
Providence, RI}
\email{christine\underline{ }breiner@brown.edu}

\author[Dees]{Ben K. Dees}
\address{Brown University\\
Department of Mathematics\\
Providence, RI}
\email{benjamin\underline{ }dees@brown.edu}

\begin{document}
\maketitle

\begin{abstract}
    We prove a discreteness result for the possible orders of harmonic maps from surfaces to Euclidean buildings; in particular for a building of type $W$ the order is of the form $\frac mk$ where $k$ divides $|W|$.  This generalizes, in the case where the domain has dimension $2$, the ``order gap" of Gromov and Schoen. This result follows by directly analyzing the behavior of homogeneous maps into Euclidean buildings, and then studying a related spherical billiards problem.
\end{abstract}

\section{Introduction}

In this note, we demonstrate discreteness of the spectrum for the order of a harmonic map from a Riemann surface to a Euclidean building. In particular, we characterize the behavior of homogeneous harmonic maps in this setting. More than simply a curiosity, discreteness of the order spectrum has played a crucial role in regularity and rigidity results for harmonic maps in singular (and smooth) settings. 

As a first step in resolving the $p$-adic superrigidity problem for rank 1 symmetric spaces of non-compact type, Gromov and Schoen \cite{gromov-schoen} prove an order gap. For a harmonic map into a locally finite building $X$, they demonstrate that there exists an $\epsilon_X>0$ so that every point in the domain either has order 1 or order at least $1+ \epsilon_X$. This gap leads to improved regularity for order 1 points which, coupled with an induction on dimension argument, allows them to show the singular set has codimension $2$; with this level of control, they prove an integral Bochner type formula to establish rigidity. Sun \cite{sun} proves the order gap and regularity for harmonic maps into $\R$-trees. A key step in the proof of the holomorphic rigidity conjecture for Teichm\"uller space, settled by Daskalopoulos and Mese in \cite{DMholo}, is also to establish an order gap. More recently, in collaboration with Mese, the authors prove the full non-Archimedean superrigidity \cite{bdm} by first establishing an order gap when $X$ is non-locally finite.

Characterization of tangent maps also feed into regularity, uniqueness and uniformization results. Using quantitative stratification techniques, the second author shows \cite{dees} the singular set of a harmonic map into a locally finite building is $(n-2)$-rectifiable. Extending his work, the authors \cite{bd} define the singular stratum for such maps and prove $k$-rectifiability for the $k$-th stratum. Kuwert \cite{kuwert} determines the discrete order spectrum for harmonic maps from Riemann surfaces into non-positively curved cones by completely characterizing homogeneous harmonic maps in this setting. He then uses this characterization to study minimizers in a degree 1 homotopy class between closed surfaces with the target having isolated cone points--minimizers are unique and are precisely the Teichm\"uller map when the target metric is the Teichm\"uller metric. The first author and Mese \cite{BreinerMese} use the order spectrum and the structure of homogenous maps from \cite{kuwert} to define a winding number for harmonic maps into CAT(1) surfaces and ultimately prove a uniformization theorem for CAT(1) spheres.

What we refer to as the \emph{order} first appears in the literature in Almgren's study of $Q$-valued harmonic maps as the \emph{frequency function} (cf. \cite{almgren}). 
 In the case of $Q$-valued harmonic maps from surfaces, De Lellis and Spadaro \cite{dLS} classify the possible orders of harmonic maps--possible orders are of the form $\frac{a}{Q^*}$ where $Q^*\leq Q$; in particular the possible orders form a discrete set for any fixed $Q$. This discreteness leads to explicit fast decay for the order at scale $r$, which in turn can be used to prove uniqueness for tangent maps. As an application, they show that the singular set of such maps is discrete, an improvement over the bound on the Hausdorff dimension (which is known for all dimensions) \cite{almgren,dLS}. For domains of dimension $n\geq 2$, it is known that the singular set is $(n-2)$-rectifiable by \cite{dmsv}; the key montone quantity in this study is, again, the order.

Mese has conjectured that the set of possible orders for harmonic maps from surfaces into Euclidean buildings is discrete; the present note shows that this conjecture holds. Precisely, we have the following.

\begin{theorem}\label{thm:ord-disc}
    Let $S$ be a domain in a Riemann surface, and let $X$ be a (not necessarily locally finite) Euclidean building of type $W$.  If $u:S\to X$ is a harmonic map, and $x_0\in S$, then the order $\Ord^u(x_0)=\frac mk$ for $m,k\in\N$ where $k$ divides $|W|$.  In particular, the possible orders lie in a discrete set determined solely by $W$.

    If $X$ is a rank $1$ Euclidean building (a tree or an $\R$-tree), then in fact $\Ord_u(x_0)=\frac m2$ for $2 \leq m \in \mathbb N$.
\end{theorem}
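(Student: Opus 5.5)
We reduce to homogeneous maps and then analyze their structure directly. By the standard blow-up theory for harmonic maps into NPC spaces (Gromov–Schoen in the locally finite case, and its extension to non-locally finite buildings in \cite{bdm}), the order $\alpha=\Ord^u(x_0)$ exists. Moreover, a tangent map at $x_0$ exists and is a homogeneous harmonic map $u_*:\R^2\to T$ of degree $\alpha$, where $T$ is the tangent cone of $X$ at $u(x_0)$. This cone is again a Euclidean building of type $W$, namely a cone over a spherical building of type $W$. So it suffices to show that any nonconstant homogeneous harmonic map $v:\R^2\to T$ of degree $\alpha$ has $\alpha=\frac mk$ with $k\mid |W|$.

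Write $v(r,\theta)=r^\alpha\cdot\gamma(\theta)$, where $\gamma:\Sp^1\to \partial T$ is a curve in the spherical building. Locally, away from a finite set of directions, the image of a small sector lies in a single apartment $\R^n$, since harmonic maps into buildings are locally maps into apartments off a singular set of codimension $2$, and the singular set of a homogeneous map is a union of rays. On such a sector $v$ is a homogeneous harmonic map into $\R^n$, so each coordinate is $r^\alpha(a\cos\alpha\theta+b\sin\alpha\theta)$. The conformality or balancing coming from the domain variation (Hopf differential $\varphi=\langle v_z,v_z\rangle$, which is holomorphic and homogeneous of degree $2\alpha-2$, hence $c z^{2\alpha-2}$ on sectors) then forces a rigid structure. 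When $\varphi\neq 0$, single-valuedness of $\varphi$ on $\R^2\setminus 0$ already gives $2\alpha\in\Bbb Z$. This yields the tree statement ($\alpha=m/2$, with $m\ge2$ because a rank-one target gives $\varphi\ne0$ unless $v$ is a geodesic, i.e.\ $\alpha=1$). For higher rank, however, $\varphi$ may vanish, and we must instead use the geometry of $\gamma$.

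In each sector, $\gamma$ traces a path in $\partial T$ that is a great-circle arc traversed at angular rate $\alpha$; more precisely, the trajectory in the apartment sphere is a geodesic reparametrized. At a singular ray, $\gamma$ passes from one apartment to another through a wall, and harmonicity (the balancing condition) says the curve continues like a billiard: reflected or unfolded across the wall by an element of $W$. Folding every apartment onto a fixed chamber via the retraction $\rho$, which is distance nonincreasing and here preserves the local structure, turns $\gamma$ into a spherical billiard trajectory in the Weyl chamber $\Sp^{n-1}/W$. Unfolding this trajectory in the model sphere $\Sp^{n-1}$ gives a closed great-circle-type path: after going once around $\Sp^1$ (angle $2\pi$, i.e.\ total length $2\pi\alpha$ on the sphere), the unfolded endpoint differs from the start by some $w\in W$. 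Since $w^{|W|}=1$, going around $|W|$ times closes up a great circle. This forces $|W|\cdot 2\pi\alpha\in 2\pi\Bbb Z$, and hence $\alpha=m/|W|$, which reduces to $m/k$ with $k\mid|W|$. (More precisely, $k$ is the order of $w$, and the closing condition is that the accumulated rotation is a multiple of $2\pi$ along the circle fixed by $w^k=1$.)

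The main obstacle is justifying the billiard description rigorously. Specifically, we must show that $\gamma$ is piecewise geodesic with finitely many breakpoints, that at each breakpoint the local picture is the reflection law (equal angles) rather than some more general branching, and that the reflections compose to an honest element of $W$ acting on a fixed model sphere. We will attack this by working on the link at a point of $\gamma$: we compare energies of competitors to rule out nonreflective turns, and we use the local finiteness of the spherical building's Coxeter complex structure (even when $X$ is not locally finite) to show the retraction is locally isometric along $\gamma$.
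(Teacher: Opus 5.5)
Your architecture matches the paper's. You reduce to a homogeneous tangent map and split by conformality; your Hopf-differential argument is the paper's Theorem \ref{thm:flat-dichotomy} in other form. Indeed, the apartment-independent quantities $\|{\bf v}_2\|^2-\|{\bf v}_1\|^2$ and ${\bf v}_1\cdot{\bf v}_2$ in $q'$ are, up to constant factors, the real and imaginary parts of $c$ in $\varphi=c\,z^{2\alpha-2}$. You then project to a billiard path in $\Deltamod$ and lift to $\Sp^{N-1}$ with finite-order monodromy $w\in W$, as the paper does.

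The gap is the step you yourself call the main obstacle. You leave the billiard description unproved and plan to derive a reflection law at breakpoints of $\gamma$ on ``singular rays'' by energy comparisons. That argument is never supplied, and it targets a situation that cannot arise. You already note that $\Si(v)$ is a union of rays and has Hausdorff codimension $2$. In a $2$-dimensional domain these together force $\Si(v)\subseteq\{0\}$, so the ``finite set of directions'' you exclude is empty and every point of $\Sp^1$ is regular.

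It follows that near each $\theta\in\Sp^1$ the map lies in one apartment. When $\varphi\equiv0$ it has the form $r^\alpha({\bf v}_1\cos\alpha\theta+{\bf v}_2\sin\alpha\theta)$ with $\|{\bf v}_1\|=\|{\bf v}_2\|$ and ${\bf v}_1\perp{\bf v}_2$. So $\gamma$ is a local geodesic of constant speed $\alpha$ in $\Sigma_{u(0)}X$ with no breakpoints at all. Passing from one apartment to another happens at regular points and is harmless, because being a local geodesic is a local condition (Lemma \ref{lem:gammadef}).

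The reflections are not a property of $v$ to be extracted from minimality. They appear only after composing with the anisotropy map $\theta_\Sigma:\Sigma_{u(0)}X\to\Deltamod$, which is an isometry on each chamber. A great-circle arc in an apartment therefore projects to a billiard path by the very definition of $\Sp^{N-1}\to\Sp^{N-1}/W$; a distance-nonincreasing retraction is not what is needed. With this in place, your lifting argument $\overline c(s+2\pi\alpha)=w\,\overline c(s)$ gives $|W|\alpha\in\N$ and finishes the proof.

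Two smaller corrections are also needed:
\begin{enumerate}
\item In the tree case, conformality into a $1$-dimensional apartment forces $\nabla v=0$. So $\varphi\neq0$ for every nonconstant $v$, linear maps included, and $m\ge2$ simply reflects $\alpha\ge1$.
\item For non-locally-finite $X$, the tangent map lands in an ultralimit building and is only intrinsically homogeneous. You need something like Lemma \ref{lem:fin-apt} to place its image in a cone over a space of directions before writing $v=r^\alpha\cdot\gamma$.
\end{enumerate}
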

\noindent Observe that in the setting where $X$ is a locally finite tree, the result here is already well-known from work on the optimal partition problem (cf. \cite{optimal-survey} and the references therein).

The structure of this paper is as follows:

\begin{enumerate}
    \item In section \ref{sec:prelim}, we briefly outline the necessary definitions and terminology for the remainder of the paper. This includes discussions of Euclidean buildings, harmonic maps, and the order functional. We close this section with a useful reduction of Theorem \ref{thm:ord-disc}; cf. Theorem \ref{thm:hom-ord-disc}.
    \item In section \ref{sec:discord} we establish a dichotomy for homogeneous maps $u$ from surfaces into buildings---either (1) $u$ has order $\frac m2$ for $2\leq m\in\N$, or (2) $u$ takes $\Sp^1$ to a curve at constant distance from $u(0)$.
    \item In section \ref{sec:sphbill} we determine the possible orders arising in case (2) of the previous dichotomy, completing the classification of the possible orders of homogeneous maps from surfaces into buildings. In this section, we relate the problem to a problem of spherical billiards.
    \item Finally, section \ref{sec:rem} has a brief discussion of the difficulties involved in extending our work here into higher-dimensional domains, and a natural question that arises in this vein.
\end{enumerate}

In collaboration with Mese, the authors are extending the ideas of this work to show discreteness for the order when $u:\widetilde M \to X$ is a pluriharmonic map and $\widetilde M$ is K\"ahler.

\subsection*{Acknowledgements} The authors are grateful to Misha Lavrov, Ethan Dlugie, and Chikako Mese for productive and enlightening discussions.

\subsection*{Data Availability} We do not analyse or generate any datasets, because our work proceeds within a theoretical and mathematical approach.
\section{Preliminaries}\label{sec:prelim}

\subsection{CAT(0) spaces}\label{CAT0sec}
A CAT(0) space $(X,d)$ is a geodesic space of non-positive curvature, where curvature is defined through triangle comparison with Euclidean triangles. Particular examples of CAT(0) spaces include Euclidean buildings (the principal focus of this note) as well as Hadamard manifolds.  We refer to \cite{bridson-haefliger} for a complete introduction to these spaces.  For our purposes, it will suffice to know that harmonic maps into CAT(0) spaces satisfy certain regularity properties, detailed in \ref{subsec:harmord}.

\subsection{Euclidean \& Spherical Buildings}

In this paper, we use Kleiner and Leeb's notion of Euclidean buildings and hence refer the reader to \cite{kleiner-leeb} for the details.  The equivalence of this notion with the one developed by Tits~\cite{tits} was established by A.~Parreau \cite{parreau}, so there is no loss in this approach (which is more metric in flavor than Tits').  

Let $\mathbb E^N$ be an $N$-dimensional affine space and  $\partial_{Tits} \mathbb E^N\simeq \Sp^{N-1}$ be its Tits boundary.  Denote by  $\rho:\mathrm{Isom}(\mathbb E^N) \to \mathrm{Isom}(\partial_{Tits} \mathbb E^N)$ the canonical homomorphism which assigns to  each affine isometry its rotational part.  An affine Weyl group $W_{\mathrm{aff}}$ is a subgroup of $\mbox{Isom}(\mathbb E^N)$  generated by reflections with finite {\bf reflection group} $W := \rho(W_{\mathrm{aff}}) \subset \mbox{Isom}(\partial_{Tits} \mathbb E^N)$.  The pair  $(\mathbb E^N, W_{\mathrm{aff}})$ is then called a {\bf Euclidean Coxeter complex}.  A {\bf wall} is a hyperplane of $\mathbb E^N$ which is the fixed point set of a reflection in $W_{\mathrm{aff}}$.

A {\bf Euclidean building} $(X,d)$ of type $W$ is a CAT(0) space endowed with a collection of isometries $\{\iota_\alpha:\R^N\to X\}_{\alpha\in\mathcal A}$ which satisfy certain axioms (cf. \cite[Section 4.1.2]{kleiner-leeb} for the precise list). We refer to the integer $N$ as the  {\bf dimension} of $X$.

As a consequence of \cite[Corollary 4.6.2]{kleiner-leeb}, the atlas $\mathcal A$ satisfies the following two properties: 
\begin{enumerate}
\item Every geodesic segment, ray, and line is contained in an image of an isometric embedding of  the collection  (cf.~\cite[EB3]{kleiner-leeb}), and 
\item Two isometric embeddings $\iota_1$ , $\iota_2$ of the  collection are compatible in the sense that $\iota_1^{-1} \circ \iota_2$ is a restriction of an isometry in $W_{\mathrm{aff}}$  (cf.~\cite[EB4]{kleiner-leeb}) 
\end{enumerate}
and $\mathcal A$ is the maximal collection satisfying the above two properties. We call $\iota \in \mathcal A$ a {\bf chart} and its image $A:=\iota(\R^N)$ an {\bf apartment}.

Given geodesics $c_1, c_2$ emanating from a common point $p\in X$, for each $t>0$ consider the triangle $\Delta p c_1(t)c_2(t)$ and its comparison triangle in $\mathbb R^2$ with vertices labeled by $\Delta P C_1(t) C_2(t)$. Let $\widetilde \angle_P(C_1(t),C_2(t))$ denote the angle at $P$. As this quantity is known to be monotone in $t$, we define $\angle_p(c_1,c_2):= \lim_{t \to 0^+} \widetilde \angle_P(C_1(t),C_2(t))$ (cf~\cite[Section 2.1.3]{kleiner-leeb}).
\begin{definition} \label{geodesicgerms}
Two geodesics $c_1, c_2$ emanating from a common point $p\in X$ are said to be equivalent if  $\angle_p(c_1, c_2)=0$.  A {\bf geodesic germ} at $p$ is an equivalence class of geodesics emanating from $p$.  The space of geodesic germs at $p$ along with the distance function defined by $\angle_p$ is a complete metric space by \cite[Lemma 4.2.2]{kleiner-leeb} and defines  the {\bf space of directions} $\Sigma_pX$.  
\end{definition}

\begin{definition} \label{def:tangentcone}
For $p \in X$, the {\bf tangent cone} $(T_p X,d_p)$ is a metric cone over $\Sigma_p X$.  Denote the vertex of $T_{p} X$ by  $\mathsf{O}$.  Any element of $T_{p} X \backslash \mathsf{O}$ can be written as
$([\gamma], t)$
where $[\gamma]$ is a geodesic germ at $p$ and $t \in (0,\infty)$. 
\end{definition}
\begin{remark}\label{rem:deltamod} 
By \cite[Section 4.2.2]{kleiner-leeb}, for a Euclidean building $X$ of type $W$, and $p\in X$:
\begin{enumerate}
 \item   $\Sigma_pX$ is a spherical building modelled on $(\Sp^{N-1}, W)$.
 \item $\Sigma_pX$ consists of copies of $\Deltamod:=\Sp^{N-1}/W$ with disjoint interiors, and there is a canonical map $\theta_\Sigma:\Sigma_pX\to\Deltamod$ which is an isometry on each copy of $\Deltamod$ in $\Sigma_pX$.
   
\end{enumerate}
\end{remark}

\subsection{Harmonic Maps}\label{subsec:harmord}

We will not provide a full exposition of the theory of harmonic maps into CAT(0) spaces and instead refer the interested reader to \cite{gromov-schoen,korevaar-schoen1,korevaar-schoen2} for further details. It will suffice for our purposes to know that for $\Omega$ a Riemannian domain and $(X,d)$ a CAT(0) space, the function space $W^{1,2}(\Omega,X)$ is sensibly defined. For $u\in W^{1,2}(\Omega,X)$, there exists an energy density function $|\nabla u|^2$ and a trace map which permits us to speak meaningfully of the values of $u$ on $\partial\Omega$. Harmonic maps in our context will mean {\em minimizing} harmonic maps; maps which minimize energy when compared to competitors with the same boundary values.

\begin{definition}
    For a map $u\in W^{1,2}(B_r(x),X)$ the {\bf energy of $u$ on $B_r(x)$} is
    \[
   E_u(x,r):=\int_{B_r(x)}|\nabla u(y)|^2dy.
    \]

    We say that a map $u\in W^{1,2}(B_r(x),X)$ is {\bf harmonic} (or energy-minimizing) if for every $v$ so that $u=v$ on $\partial B_r(x)$,
    \[
    E_u(x,r)\leq E_v(x,r).
    \]
    A map $u\in W^{1,2}(\Omega,X)$ is {\bf harmonic} if for every $x\in \Omega$ there exists an $r>0$ such that $u|B_r(x)$ is harmonic by the previous definition.

    We will sometimes omit the subscript and write $E(x,r)$ when the map under discussion is clear.
\end{definition}

Regularity of harmonic maps into Euclidean buildings is established, and we will mention results which will be of import in this note. Let $u:\Omega\to X$ be harmonic, where $X$ is a Euclidean building. Given $x\in\Omega$, if there exists $r>0$ such that $u|B_r(x)$ maps into a single apartment $A$ of $X$, then $u|B_r(x)$ is simply a map into a Euclidean space.  We thus say that such a point is {\bf regular} and call the set $\mathcal{R}(u)$ of such points the {\bf regular set} (of $u$). If $x\notin\mathcal{R}(u)$, we say that $x$ is a {\bf singular point} and write $x\in\Si(u)$.  When $X$ is a Euclidean building (locally finite or not), we have the following bound on the size of the singular set.

\begin{theorem}\label{thm:haus-bound}[\cite[Theorem 6.4]{gromov-schoen} and \cite[Theorem 1.1]{bdm}]
If $u:\Omega\to X$ is a harmonic map from a Riemannian domain to a Euclidean building, then the singular set $\Si(u)$ has Hausdorff codimension $2$. In particular, if $\dim(\Omega)=2$, $\Si(u)$ has Hausdorff dimension $0$.
\end{theorem}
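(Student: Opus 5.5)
The plan is to prove Theorem~\ref{thm:haus-bound} by Federer-style dimension reduction. The input is two facts: that the order (Almgren's frequency) is monotone, and that there is an \emph{order gap}.

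First I would recall the monotonicity. For a harmonic map into a CAT(0) space, the quantity $\Ord^u(x,r)=\frac{rE(x,r)}{\int_{\partial B_r(x)}d^2(u,u(x))}$ is monotone non-decreasing in $r$ (Gromov--Schoen). Its limit $\Ord^u(x)\geq 1$ is upper semicontinuous in $x$.

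Next I would construct tangent maps. Rescale $u$ at $x$ by the factor $\left(r^{1-n}\int_{\partial B_r(x)}d^2(u,u(x))\right)^{-1/2}$, and rescale the target metric by the same factor. The target then converges to the tangent cone $T_{u(x)}X$, which is again a Euclidean building of type $W$ (a cone over the spherical building $\Sigma_{u(x)}X$, by Remark~\ref{rem:deltamod}). The uniform Lipschitz bounds from Korevaar--Schoen give subconvergence of the rescaled maps to a harmonic map $u_*:\R^n\to T_{u(x)}X$. Constancy of the order on the limit forces $u_*$ to be homogeneous of degree $\alpha=\Ord^u(x)$. In the locally finite case this is ordinary Arzel\`a--Ascoli. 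In the non-locally-finite case I would pass to an ultralimit instead and check that it remains a building of the same type.

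The key step, and the one I expect to be the main obstacle, is the order gap and regularity at order one. I would show there is $\epsilon_W>0$ such that every $x$ has either $\Ord^u(x)=1$ or $\Ord^u(x)\geq 1+\epsilon_W$, and that every point of order $1$ is regular.

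For order $1$, the homogeneous tangent map is a degree-one map into a cone, hence (by the harmonicity and CAT(0) comparison) an affine map into a flat, and so it lies in a single apartment. If order $1$ were not isolated, a compactness argument would produce homogeneous maps of order $1+\epsilon_i\to 1$ converging to such an affine map, which contradicts a rigidity property of apartments in a building of fixed type $W$. The gap then gives a power decay of $\Ord^u(x,r)-1$ and hence $C^{1,\beta}$-type closeness of $u$ to a map into one apartment at all small scales. Finally, one uses the fact that a small perturbation of a flat in a building of type $W$ that is a minimizer must lie in an apartment, which gives $x\in\mathcal R(u)$.

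In the non-locally-finite setting, the compactness in the contradiction argument is the delicate point. I would first try to run the argument entirely on the tangent cone, using its finitely many chamber types in $\Deltamod$, rather than on $X$ itself; this is where the work of \cite{bdm} is needed.

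Finally I would carry out the dimension reduction. Suppose $\mathcal H^{n-2+\delta}(\Si(u))>0$. Blow up at a density point. By upper semicontinuity of the order and the gap, singular points of $u$ converge to points of order $\geq 1+\epsilon_W$ of the tangent map, which are singular by the previous step. So the homogeneous tangent map $u_*$ also has a singular set of positive $\mathcal H^{n-2+\delta}$ measure. Take a point $y\neq0$ of that singular set with maximal order and blow up again. The resulting map is invariant in the direction $y$, so it splits as $\R\times v$ with $v$ harmonic on $\R^{n-1}$.

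Iterating this reduces the problem to a homogeneous harmonic map on $\R^{k}$ with $k\leq 1$ whose singular set is nonempty. That is impossible, because a harmonic map on $\R^1$ is a geodesic, which lies in an apartment by property (1) of the atlas. Hence $\dim_{\mathcal H}\Si(u)\leq n-2$, and for $n=2$ the singular set has Hausdorff dimension $0$.
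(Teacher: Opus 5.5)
The paper does not prove this statement. It quotes it from \cite{gromov-schoen} (locally finite case) and \cite{bdm} (non-locally finite case, via ultralimits). Your outline (monotonicity, tangent maps, order gap with regularity at order-one points, then Federer reduction) is the scheme of those sources. However, your dimension-reduction step has a genuine gap.

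You claim that limits of singular points are ``points of order $\geq 1+\epsilon_W$ of the tangent map, which are singular by the previous step.'' The previous step only gives $\Si(u)\subseteq\{\Ord\geq 1+\epsilon_W\}$, since order one implies regular. The converse is false, because regular points can have any integer order $\geq 2$. For example, $z\mapsto \mathrm{Re}(z^2)$ into a geodesic line of an apartment is regular at $0$ with order $2$; case (1) of Theorem~\ref{thm:flat-dichotomy} also includes such integer orders. So you have not shown that singular sets are upper semicontinuous under blow-up. Nothing in your argument stops singular points of the rescalings from accumulating at a regular point of $u_*$. Then positivity of $\mathcal H^{n-2+\delta}$ does not pass to $\Si(u_*)$, and the iteration fails at every stage.

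The repair, which is how the cited argument runs, is to apply the reduction to the closed set $\Sigma_\epsilon(u):=\{x:\Ord^u(x)\geq 1+\epsilon_W\}$. This set contains $\Si(u)$ by the gap and order-one regularity. Upper semicontinuity of the order under convergence of the rescaled maps gives exactly the needed property: limits of points of $\Sigma_\epsilon(u_i)$ lie in $\Sigma_\epsilon(u_*)$, so $\mathcal H^s_\infty$-positivity passes to the blow-up.

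Two further corrections are needed in the iteration.
\begin{itemize}
\item Blow up $u_*$ at a point $y\neq 0$ of positive upper $\mathcal H^s$-density of $\Sigma_\epsilon(u_*)$ lying off the spine $\{y:\Ord^{u_*}(y)=\Ord^{u_*}(0)\}$. A point of ``maximal order'' gives you neither density nor position off the spine.
\item Invariance of the new tangent map along $y$ needs $\Ord^{u_*}(y)>1$. Upper semicontinuity forces the blow-up to have top order along $\R y$, and a homogeneous map of order $\alpha>1$ with $\Ord(y)=\alpha$ satisfies $u(y)=0_X$ and is translation invariant along $y$. At order-one points this fails: the blow-up of $\mathrm{Re}(z^2)$ at $y=1$ is linear and not invariant along $y$.
\end{itemize}
At the bottom of the reduction, a map depending on one variable is a constant-speed geodesic, so it has order $1$ everywhere. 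This contradicts $0\in\Sigma_\epsilon$.

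Finally, your order-gap step should name its actual input. The ``rigidity of apartments'' is the Gromov--Schoen $\epsilon$-regularity coming from $F$-connectedness: a minimizer sufficiently close to an affine map into a flat maps a smaller ball into a single apartment. Applied to homogeneous maps of order $1+\epsilon_i$ converging to an affine map, it places them in an apartment. There orders are integers, which gives the contradiction.
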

When $X$ is locally finite, the authors go further and establish in \cite{bd} that the singular set admits a natural stratification into rectifiable strata.
 
\subsection{Order function}\label{subsec:order}

The order $\Ord^u(x_0)$ of a harmonic map $u$ is analogous to the degree of the first nonvanishing term of the Taylor expansion of $u(x)-u(x_0)$.  However, unlike in the smooth setting, $\Ord^u(x_0)$ does not need to be integer valued at singular points of $u$. 

\begin{definition}\label{def:terminology}For a map $u:\Omega\subset\R^n\to X$, where $X$ is a Euclidean building, let
\begin{align}
\nonumber I(x,r)&:=\int_{\partial B_r(x)}d^2(u(y),u(x))dy,\\
\nonumber\Ord(x,r)&:=\frac{rE(x,r)}{I(x,r)}.
\end{align}
\end{definition}

In \cite{gromov-schoen} the authors show that for locally finite targets, there is a $c$ depending only on the metric $g$ of the domain so that $e^{cr^2}\Ord(x,r)$ is a monotonically increasing function of $r$, bounded from below by $1$. This monotonicity extends easily to the non-locally finite setting and thus the limit as $r\to0$ of $\Ord(x,r)$ is hence well-defined for each $x\in\Omega$. 

\begin{definition}
    The {\bf order function} $\Ord:\Omega \to [1,\infty)$ is given by
    \[
    \Ord(x):=\lim_{r\to0}\Ord(x,r).
    \]   
\end{definition}

For harmonic maps into locally finite buildings, \cite{gromov-schoen} establish an order gap.\footnote{In fact, Gromov and Schoen's methods apply to the slightly more general class of $F$-connected complexes.  We will not have cause to work with this class in this note, so we state the result only for Euclidean buildings.} This result is generalized in \cite{sun,bdm} to the non-locally finite setting.

\begin{theorem}\label{thm:gs6.3}[\cite[Theorem 6.3]{gromov-schoen},\cite[Theorem 1.1]{sun}, and \cite[Theorem 6.6]{bdm}]
    For $X$ a Euclidean building, and any $n\in\N$, there is an $\epsilon=\epsilon(X,n)$ so that if $u:\Omega\to X$ is a harmonic map from a Riemannian domain $\Omega$ of dimension $n$, then for all $x\in\Omega$, either $\Ord^u(x)=1$, or $\Ord^u(x)\geq1+\epsilon$.
\end{theorem}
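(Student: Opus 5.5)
The order gap is the result of \cite{gromov-schoen}, with the extensions in \cite{sun,bdm}. The plan below follows the strategy of those papers, argued by contradiction and compactness. Suppose there are a fixed $n$, harmonic maps $u_i:\Omega_i\to X$ and points $x_i$ with $1<\Ord^{u_i}(x_i)=\alpha_i\to 1$.

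\emph{Step 1: reduce to homogeneous maps.} Rescale each $u_i$ at $x_i$, normalizing by $\sqrt{I(x_i,r)/r^{n-1}}$, and use the monotonicity of $e^{cr^2}\Ord(x,r)$ to pass to a tangent map. For locally finite $X$ this uses the Korevaar--Schoen compactness theorem; in the non-locally finite case it uses ultralimits, where the limit target is again a Euclidean building of type $W$. The result is a homogeneous harmonic map $v_i:\R^n\to T_{u_i(x_i)}X$ of degree exactly $\alpha_i$, that is, $v_i(r\omega)=r^{\alpha_i}\cdot v_i(\omega)$ in the cone structure. The target $T_pX$ is a Euclidean cone over the spherical building $\Sigma_pX$ of type $W$ (Remark~\ref{rem:deltamod}). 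It therefore suffices to show that there is no sequence of homogeneous harmonic maps into such cones with degrees $\alpha_i\downarrow 1$, $\alpha_i\neq 1$.

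\emph{Step 2: identify the limit.} Normalize $I(0,1)=1$ and take a further limit (again by compactness or ultralimit) to get a homogeneous harmonic map $v$ of degree $1$ into a cone over a spherical building of type $W$. Degree-one homogeneous harmonic maps into CAT(0) cones are ``effectively linear.'' Concretely, $d^2(v,\mathsf O)$ is then a homogeneous quadratic with $\Delta d^2 = 2|\nabla v|^2$, and the equality case of the CAT(0) subharmonicity inequalities forces $v$ to map $\R^n$ linearly onto a flat. That flat lies in an apartment by axiom EB3 and the convexity of flats in buildings. In particular, $v(\Sp^{n-1})$ lies in a round great subsphere of some apartment of $\Sigma_pX$.

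\emph{Step 3: the main obstacle.} One must upgrade ``$v_i$ is close to a linear map into one apartment'' to ``$v_i$ itself maps into one apartment.'' That would force $\alpha_i=1$, a contradiction. I would argue by induction on $N$, the rank of the building. Near a point $q=v(\omega)\neq\mathsf O$, the cone $T_pX$ is locally a product $\R\times T_q(\Sigma_pX)$-cone of lower rank. So away from the origin, Gromov--Schoen type regularity for lower rank (the inductive hypothesis, and the codimension-$2$ singular set estimate of Theorem~\ref{thm:haus-bound}) shows that $v_i$ restricted to $\Sp^{n-1}$ is, for $i$ large, locally in a single apartment.

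The remaining issue is global: the finitely many chambers of type $\Deltamod=\Sp^{N-1}/W$ met by $v_i(\Sp^{n-1})$ must glue to a single apartment. For this I would use the finiteness of $W$, so that only finitely many combinatorial configurations occur, together with the fact that the angular part $\theta_\Sigma\circ v_i$ is $C^0$-close to the linear image of $v$. This is where local finiteness of $X$ was used in \cite{gromov-schoen}. For non-locally finite targets I would instead follow \cite{bdm} and \cite{sun}: replace counting branches by a Busemann/root-function argument. For each root direction $\xi$ of $W$, the pullback of the Busemann function along $\xi$ is subharmonic. It would be shown to be homogeneous of degree $\alpha_i$ and near-linear, and hence, by a first-eigenvalue gap on $\Sp^{n-1}$, exactly linear. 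That pins $v_i$ into an apartment.

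I expect this last gluing and rigidity step to be the hardest part. The tangent-map reduction and the classification of degree-one maps are standard.
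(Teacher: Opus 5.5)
The paper gives no proof of this statement. It imports it from \cite{gromov-schoen,sun,bdm} and only remarks that for $n=2$ it is a special case of Theorem~\ref{thm:ord-disc}. Your Steps 1--2 are the standard opening and are fine: blow up to homogeneous maps of degrees $\alpha_i\downarrow 1$, pass to a limit, and identify the degree-one limit $v$ as a linear map onto a flat. The gap is in Step 3, which is where the theorem actually lives, and the one concrete rigidity mechanism you propose there does not work. A function homogeneous of degree $\alpha_i\neq 1$ is linear only if it vanishes, so what you need is \emph{nonexistence} of subharmonic degree-$\alpha_i$ functions near a nonzero linear one. That is false. Take $f_\alpha=r^\alpha(\cos\theta+1-\alpha^{-2})$: then $\Delta f_\alpha=(\alpha^2-1)r^{\alpha-2}(1+\cos\theta)\ge 0$ and $f_\alpha\to x_1$ as $\alpha\downarrow 1$. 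Likewise $r^\alpha(-\cos\theta+1-\alpha^{-2})$ works for the opposite root, so pairing $\pm\xi$ does not help, and the same construction works in every dimension. The eigenvalue gap only applies if $b_\xi\circ v_i$ is \emph{harmonic}, i.e.\ if $v_i$ does not fold across walls relative to $\xi$. That is essentially the apartment confinement you are trying to prove. Your gluing step has the same defect. The map $\theta_\Sigma$ identifies all chambers of a given type, so $C^0$-closeness of $\theta_\Sigma\circ v_i$ to the linear model is blind to exactly the branching along walls that must be ruled out. Finiteness of $W$ bounds chamber types, not the number or arrangement of chambers that $v_i(\Sp^{n-1})$ meets.

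The local half of Step 3 is also not established, for three reasons. (i) The splitting $\R\times(\text{lower rank})$ is only available at $\omega$ with $v(\omega)\neq\mathsf O$. Whenever $v$ has a kernel (always if $n>N$, in particular for trees), $v$ sends $\ker v\cap\Sp^{n-1}$ to the cone point, and that is where the $v_i$ are free to branch. For trees the whole problem sits there, and your induction says nothing about it. (ii) Even where $v(\omega)\neq\mathsf O$, the induction does not give regularity. If the lower-rank component of $v$ is degenerate near $\omega$ (e.g.\ $v$ of rank one), nothing prevents the corresponding component of $v_i$ from being singular of order $\ge 1+\epsilon'$ at $\omega$ while $v_i$ itself has order $1$ there. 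Order one does not imply regularity: $(x,y)\mapsto(x,w(x,y))$ into $\R\times T$, with $w$ the degree-$\frac32$ map into a tripod $T$, has order $1$ at the origin and is singular there. (iii) Appealing to Theorem~\ref{thm:haus-bound} is circular unless you restrict it to the lower-rank inductive hypothesis, since in \cite{gromov-schoen,bdm} the codimension-$2$ bound is proved using the order gap. For $n\ge 3$ it would not exclude singular points on $\Sp^{n-1}$ in any case. What is missing is a genuine rigidity statement, proved from energy minimality rather than from subharmonicity of convex pullbacks, showing that a homogeneous minimizing map of degree close to $1$ cannot branch, in particular near the preimage of the cone point. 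That is precisely what the cited works supply, and your sketch does not.
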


Note that the above is a special case of Theorem \ref{thm:ord-disc}, showing that $1$ is an isolated point in the set of possible orders.

\subsection{Homogeneity and tangent maps}
Gromov and Schoen \cite{gromov-schoen} develop a notion of tangent maps for harmonic maps into some CAT(0) spaces, including locally finite Euclidean buildings.  The situation for more general CAT(0) space targets is developed in \cite{korevaar-schoen1,korevaar-schoen2}, and a particular extension focused on non-locally finite Euclidean buildings can be found in \cite{bdm}. The notion of tangent maps has also been extended to CAT(1) targets in \cite{BFHMSZ}. Tangent maps in this setting are homogeneous but, in contrast to the smooth setting, are non-constant even at smooth points. In particular, at points of order $1$, the tangent map is a non-constant affine map. 

Let $u:B_\sigma(x) \subset \mathbb R^n \to (X,d)$ be a harmonic map into a Euclidean building $X$ and change coordinates in the domain so that $x=0$. Then for each $\lambda>0$ define the map 
\begin{equation}\label{eq:rescalings}
u_{\lambda}:B_{\sigma/\lambda}(0) \to (X,d_\lambda)
\end{equation}
by $u_\lambda(y) := u(\lambda y)$, where $d_\lambda =(\lambda^{1-n}I(0,\lambda))^{-1/2}d$. 

The scalings on domain and target are chosen so that for all $\lambda>0$ the functions $u_\lambda$ have uniform energy bounds on $B_1$.  In the locally finite setting, the regularity theory of \cite{gromov-schoen} implies that a subsequence converges uniformly to a non-constant, harmonic map $u_{*}:B_1 \to T_{u(x)}X$.  In the non-locally finite setting, \cite{bdm} shows how to use ultralimits to obtain a map $u_*$ into an ultralimit building $X_*$. 
\begin{definition}\label{def:tangentmap}
    Any subsequential limit map $u_{*}:B_1 \to T_{u(x)}X$ in the locally finite setting or any ultralimit map $u_*:B_1 \to X_*$ in the non-locally finite setting realized by the rescaling process above is called a {\bf tangent map to $u$ at $x$}.
\end{definition}
\begin{remark}\label{rem:properties}Two crucial properties of this rescaling process should be noted. First, the order remains unchanged. That is
\begin{equation}\label{eq:order_compare}
\Ord^u(x)=\Ord^{u_*}(0).
\end{equation}
Second, if $X$ is of type $W$, then $T_{u(x)}X, X_*$ are both of type $W$ (cf. \cite{kleiner-leeb,bdm}).
\end{remark}

Even in the non-locally finite setting, we shall see that for maps from surfaces, these ultralimits map (at least locally) into a metric cone over a spherical building; cf. Lemma \ref{lem:fin-apt}. We shall need the following definition to establish this.

\begin{definition}
    A harmonic map $u:B_1(0)\to X$ is {\bf intrinsically homogeneous of order $\alpha$} if for all $x\in B_1(0)$, $\lambda\in(0,1)$,
    \begin{enumerate}
        \item 
$        d(u(\lambda x),u(0))=\lambda^\alpha d(u(x),u(0)),
 $     
        \item The image of $\lambda\mapsto u(\lambda x)$ is a geodesic segment from $u(0)$.
    \end{enumerate}

    By \cite[Remark 2.11]{bdm} and \cite[Lemma 3.2]{gromov-schoen}, any tangent map $u_*$ arising from our rescaling process is necessarily intrinsically homogeneous, even if the target is non-locally finite.
\end{definition}

Now we prove that every intrinsically homogeneous map from a Euclidean disk into a Euclidean building maps into a cone over the space of directions, at least locally. This will justify our subsequent focus on {\em conical} buildings, as we can always reduce to this case.

\begin{lemma}\label{lem:fin-apt}
    If $u:B_2(0)\to X$ is an intrinsically homogeneous harmonic map from the unit disc $B_2(0)\subseteq\R^2$ into a non-locally finite Euclidean building $X$, there is a finite collection of apartments $\{A_1,A_2,\dots,A_M\}$ containing $u(0)$ so that $u(B_{1}(0))\subseteq\bigcup_{i=1}^MA_i$. 
    
    In particular, there is a ball $B_\rho(0)$ so that $u(B_\rho(0))$ is isometric to a subset of a finite simplicial complex in $T_{u(0)}X$.
\end{lemma}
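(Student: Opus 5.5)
We plan to use intrinsic homogeneity to reduce the question to the circle $\partial B_1(0)$, and then exploit compactness of that circle together with the local structure of the building near each point of the image.

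\emph{Step 1 (reduction to the unit circle).} For each $x$ in the closed disc $\overline{B_1(0)}$, the image of $\lambda\mapsto u(\lambda x)$ is a geodesic segment from $u(0)$. Every geodesic segment lies in some apartment by property (1) of the atlas. Hence it suffices to cover the curve $\gamma:=u|_{\partial B_{3/2}(0)}$, or more precisely each radial segment $[u(0),u(x)]$ with $|x|=3/2$, by finitely many apartments containing $u(0)$. If a radial segment lies in an apartment $A$, then so does every initial subsegment, because apartments are convex. Covering the segments from the circle of radius $3/2$ therefore handles all of $B_1(0)$. Note also that $u$ is continuous: it is locally Lipschitz on $B_2(0)$ by the Gromov--Schoen/Korevaar--Schoen regularity recalled in \S\ref{subsec:harmord}.

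\emph{Step 2 (local finiteness along the circle).} Fix $y\in\partial B_{3/2}(0)$ and an apartment $A_y$ containing the segment $[u(0),u(y)]$. We want a neighborhood $U_y$ of $y$ on the circle and finitely many apartments, each containing $u(0)$, that together contain $[u(0),u(z)]$ for every $z\in U_y$. The idea is that the triangle with vertices $u(0)$, $u(y)$, $u(z)$ is spanned by two geodesics from $u(0)$ whose directions at $u(0)$ are close. By Lipschitz continuity, $d(u(y),u(z))\le C|y-z|$. Homogeneity gives $d(u(\lambda y),u(\lambda z))=\lambda\,d(u(y),u(z))$-type control through the scaling. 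Together these control the angle $\angle_{u(0)}$ between the two segments. Since $\Sigma_{u(0)}X$ is a spherical building of type $W$ (Remark~\ref{rem:deltamod}), directions within a small angle of a given direction lie in the finitely many chambers adjacent to the carrier simplex of that direction. For each such chamber pair we pick one apartment.

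The main obstacle is that $X$ is not locally finite, so a point of $\Sigma_{u(0)}X$ can meet infinitely many chambers. We cannot simply say that a neighborhood meets finitely many chambers. The fix we would try first uses continuity. The map $\theta\mapsto[\text{direction of }u(\theta)]\in\Sigma_{u(0)}X$ is a continuous curve, Lipschitz in the angle metric. A continuous curve in a spherical building crosses walls, and between crossings it stays in a single closed chamber. The harmonicity argument, as in \cite{gromov-schoen}, gives that a harmonic homogeneous map restricted to a sector mapping into one apartment is real-analytic there. So the direction curve cannot oscillate infinitely often between chambers near a point, and crossings are isolated. Alternatively, we would invoke Theorem~\ref{thm:haus-bound}: the singular set of $u$ on $B_2\setminus\{0\}$ is $0$-dimensional and homogeneous (dilation invariant), so it consists of finitely many rays in any compact annulus. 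Off those rays, $u$ locally maps into one apartment. The chamber in $\Sigma_{u(0)}X$ is then locally constant on each open arc, giving finitely many arcs. Each arc is covered by one apartment containing $u(0)$ together with the segment; at the finitely many singular rays one takes the union of the two adjacent apartments. Compactness of the circle then yields $A_1,\dots,A_M$.

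\emph{Step 3 (the second statement).} Each $A_i$ contains $u(0)$. A neighborhood of $u(0)$ in $A_i$ is isometric to a neighborhood of the vertex $\mathsf O$ in the tangent cone $T_{u(0)}A_i\cong\R^N$, which sits inside $T_{u(0)}X$ as a finite union of cones over chambers of $\Delta_{\mathrm{mod}}$. Choose $\rho$ smaller than the distance from $u(0)$ to the nearest wall of each $A_i$ not passing through $u(0)$; only finitely many such distances occur, so this minimum is positive. Then, by intrinsic homogeneity, $u(B_\rho(0))$ lies in the union of the finitely many cones $T_{u(0)}A_i$, glued along common chambers. This union is a finite simplicial complex, and the exponential identification is an isometry onto it.
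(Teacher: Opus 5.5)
Your Step~1 reduction is fine, but Step~2 carries the whole content of the lemma and has a genuine gap that neither proposed fix closes.

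\emph{The real-analyticity fix.} This is circular. Analyticity is only available on sectors already known to map into one apartment, so it says nothing about what happens at a \emph{singular} point of the circle, which is exactly the case at issue. It also tracks the wrong object. The chamber of $\Sigma_{u(0)}X$ containing the direction of $[u(0),u(z)]$ records only the germ of that segment. In a non-locally finite building, segments with the same germ can branch apart farther out. So finitely many chambers in $\Sigma_{u(0)}X$ do not give finitely many apartments containing the full segments.

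\emph{The singular-set fix.} This assumes the singular set is dilation invariant, which you cannot get for a merely \emph{intrinsically} homogeneous map. If $u(B_r(x))\subset A$ but $u(0)\notin A$, the points $u(\lambda y)$ lie on segments from $u(0)$ to $A$ and need not lie in one apartment. The paper's proof stresses that singular points on $\partial B_1(0)$ cannot be ruled out a priori. The claim is also self-contradictory: a dilation-invariant subset of $B_2(0)\setminus\{0\}$ of Hausdorff dimension $0$ contains no ray segment, so it would be empty, not a finite union of rays.

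\emph{Regular points.} Even at a regular point $y$, regularity only gives $u(U_y)\subset A_y$ for an apartment $A_y$ that need not contain $u(0)$. So ``each arc is covered by one apartment containing $u(0)$'' is unjustified. The segments from $u(0)$ to $u(U_y)$ lie in the convex hull of $u(0)$ and a piece of $A_y$, and this hull is in general not contained in one apartment. Already in a tree, a point off a line together with a segment of the line around its nearest-point projection spans a tripod.

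The paper supplies the two missing ingredients. First, rather than assuming the singular set is conical, it pushes outward. For $p\in\partial B_1(0)$ the radial segment $\{\lambda p:1<\lambda<2\}$ is one-dimensional, so by Theorem~\ref{thm:haus-bound} it contains a regular point $\lambda p$. Some arc $J$ around $\lambda p$ then has $u(J)$ in a single apartment. Second, intrinsic homogeneity places the image of the sector over $J$ in the convex hull of $u(0)$ and $u(J)$. The finiteness result \cite[Corollary 4.6.8]{kleiner-leeb} covers this hull by finitely many apartments through $u(0)$. Then $I_p=\lambda^{-1}J$ is the required neighborhood of $p$, and compactness plus a second application of the same corollary gives $A_1,\dots,A_M$.

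A lesser issue is in your Step~3. When $W_{\mathrm{aff}}$ is not discrete (e.g.\ for $\R$-trees, which the lemma allows), walls accumulate at $u(0)$, so ``the nearest wall not through $u(0)$'' can be at distance $0$. What you actually need is only that the finitely many $A_i$ are glued conically along their pairwise intersections near $u(0)$.
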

\begin{proof}
    Since $u$ is only intrinsically homogeneous, we cannot immediately conclude that the singular set is conical and thus there may be singular points on $\partial B_1(0)$. Nevertheless, we claim that for each $p\in\partial B_{1}(0)=\Sp^1$, there is an open interval $I_p\subseteq\Sp^1$ containing $p$, and a finite collection of apartments $\mathcal{A}_p=\{A_{1,p},\dots,A_{k_p,p}\}$ so that $u(I_p)\subseteq\bigcup_{n=1}^{k_p}A_{n,p}$. If $p$ is a regular point this is of course immediate, with $\mathcal A_p= \{A_{1,p}\}$.

         If $p$ is {\em not} regular, we observe that because the singular set of $u$ has dimension $0$, there is some $\lambda\in(1,2)$ so that $\lambda p$ is regular. Thus there exists an interval $J_{\lambda p} \subseteq \lambda \Sp^1$ with $u(J_{\lambda p})$ contained in a single apartment $A_{\lambda p}$. By intrinsic homogeneity, the set $\hat J_{\lambda p}:=\{u(r,J_{\lambda p}),r \in (0,\lambda]\}$ lies in the convex hull of $u(0)$ and $u(J_{\lambda p})$. Applying \cite[Corollary 4.6.8]{kleiner-leeb}, we obtain a finite collection of apartments $\mathcal{A}_{\lambda p}$ containing $u(0)$ which cover $\hat J_{\lambda p}$.  Define $I_p:= \lambda^{-1}J_{\lambda p}\subseteq \Sp^1$. Then $u(I_p)=u(\lambda^{-1}J_{\lambda p})\subseteq  \hat J_{\lambda p}$, so $u(I_p)$ is covered by the apartments in $\mathcal A_p:= \mathcal{A}_{\lambda p}$.
    
    Now, by compactness, there is a finite collection $\{p_1,\dots,p_K\}\subset\Sp^1$ so that $\{I_{p_j}\}_{j=1}^K$ covers $\Sp^1$. Again, by intrinsic homogeneity we know that $u(B_1(0))$ is contained in the convex hull of $u(0)$ and the apartments in the collections $\mathcal A_{p_1}, \dots, \mathcal A_{p_K}$. And again applying \cite[Corollary 4.6.8]{kleiner-leeb}, we obtain a finite collection of apartments $A_1,\dots,A_M$ such that
    \[
    u( B_1(0))\subseteq\bigcup_{i=1}^MA_i.
    \]

    Now, we observe that the complex $C=\bigcup_{i=1}^MA_i$ is a finite simplicial complex in $X$. There is hence some $r>0$ so that $B_r(u(0))\cap C$ is isometric to a neighborhood in the tangent cone $T_{u(0)}C$. Now, because $T_{u(0)}C\subseteq T_{u(0)}X$, and $u$ is continuous, there must be a $B_\rho(0)$ so that $u(B_\rho(0))$ is isometric to a subset of $T_{u(0)}X$.
    
\end{proof}

Because $T_{u(0)}X$ is a metric cone over $\Sigma_{u(0)}X$, the above result means that tangent maps {into non-locally finite buildings} can be regarded as maps into conical buildings---Euclidean buildings which are metric cones. Hence, by scaling with respect to the cone point, we can give an extrinsic definition for homogeneity as in \cite{bd}, rather than the intrinsic one used above and in \cite{gromov-schoen}.

\begin{definition}\label{def:ext-hom}
    Let $X_C$ be a conical Euclidean building with cone point $0_X$.
    We say that a map $h:\mathbb R^n\to X_C$ is {\bf homogeneous of order $\alpha$ about $x_0$} if $h(x_0)=0_X$ and for all $z\in\mathbb R^n$ and $\lambda>0$
\begin{equation}\label{eq:hom-def}
h(x_0+\lambda z)= \lambda^\alpha h(x_0+z).
\end{equation}
When $x_0=0$ we simply say $h$ is homogeneous of order $\alpha$.
\end{definition}

In \eqref{eq:hom-def}, we interpret the scalar multiplication on the right-hand side to mean scaling the point $h(x_0+z)$ by a factor of $\lambda^\alpha$ about the cone point $0_X$.  (Formally, it is the unique point at distance $\lambda^\alpha d(0_X,h(x_0+z))$ along the geodesic ray from $0_X$ to $h(x_0+z)$, where the uniqueness of this geodesic ray is a consequence of conicality.)

The above discussion of tangent maps means that to classify the possible orders for harmonic maps $u:\Omega\to X$, it suffices to study the orders of homogeneous harmonic maps, because we can reduce to studying the tangent map. (Recall also Remark \ref{rem:properties}.) Precisely, we have the following lemma.

\begin{lemma}\label{lem:reduce-to-hom}
    Suppose that $u:S\to X$ is a nonconstant harmonic map from a domain in a Riemann surface to a Euclidean building $X$ of type $W$.  If $\alpha=\Ord^u(x_0)$ for $x_0\in S$, then there is a nonconstant homogeneous harmonic map $u_*:B_1(0)\to X'$ of order $\alpha$, where $B_1(0)\subset\R^2$ and $X'$ is a conical Euclidean building of type $W$. Additionally, the image of $u_*$ lies in a finite simplicial cone $C\subseteq X'$.
\end{lemma}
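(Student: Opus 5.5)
Since the order is defined locally and is invariant under conformal changes of coordinates up to the factor $e^{cr^2}$ in the monotonicity formula, I will first pass to a conformal chart about $x_0$. This reduces matters to a harmonic map $u:B_\sigma(0)\subset\R^2\to X$ with $x_0=0$; in two dimensions the energy, and hence the limiting order, is conformally invariant. Then I apply the rescaling procedure \eqref{eq:rescalings} at $0$. In the locally finite case, Gromov--Schoen give a subsequential uniform limit $u_*:B_1\to T_{u(x_0)}X$. In the non-locally finite case, \cite{bdm} gives an ultralimit $u_*:B_1\to X_*$. By Remark \ref{rem:properties}, the limit has $\Ord^{u_*}(0)=\alpha$, and its target is a building of type $W$. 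The normalization $d_\lambda$ forces $I_{u_*}(0,1)=1$, so $u_*$ is nonconstant. By the discussion following the definition of intrinsic homogeneity, $u_*$ is intrinsically homogeneous of order $\alpha$. One point to check is that the homogeneity exponent equals the order. This follows from the standard computation: intrinsic homogeneity gives $I(0,r)=r^{1+2\alpha}I(0,1)$, and then $\Ord(0,r)$ is constant and equal to $\alpha$.

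Next I apply Lemma \ref{lem:fin-apt} to $u_*$. Its hypotheses require a domain $B_2(0)$, which I arrange by taking the tangent map on a larger ball; this is possible because the rescalings $u_\lambda$ are defined on $B_{\sigma/\lambda}$. The lemma then gives a finite union of apartments $C_0=\bigcup_{i=1}^M A_i$ through $u_*(0)$ and a ball $B_\rho(0)$ such that $u_*(B_\rho(0))$ lies isometrically in the tangent cone $T_{u_*(0)}C_0\subseteq T_{u_*(0)}X_*$. I set $X':=T_{u_*(0)}X_*$. This is a conical Euclidean building of type $W$: it is the cone over the spherical building $\Sigma_{u_*(0)}X_*$ modelled on $(\Sp^{1},W)$ by Remark \ref{rem:deltamod}, and Kleiner--Leeb identify such cones as Euclidean buildings with the same $W$. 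I take $C:=T_{u_*(0)}C_0$, which is a finite simplicial cone.

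Finally, I rescale $u_*|_{B_\rho}$ to the unit ball by setting $v(y):=\rho^{-\alpha}u_*(\rho y)$, with the dilation performed about the cone point. Intrinsic homogeneity says that rays $\lambda\mapsto u_*(\lambda x)$ are geodesic segments from $u_*(0)$ with distance scaling as $\lambda^\alpha$. Inside the cone $X'$, such segments are exactly radial rays. Hence $v(\lambda z)=\lambda^\alpha v(z)$ with respect to the cone scaling, which is extrinsic homogeneity in the sense of Definition \ref{def:ext-hom}. Energy minimality is preserved for two reasons: the identification is a local isometry onto its image in $X'$, and harmonic maps into a convex subcomplex remain minimizing in the ambient space, since the nearest-point projection onto a convex set is $1$-Lipschitz. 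Cone dilation is a homothety, so it also preserves harmonicity, and the order is unchanged.

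I expect the main obstacle to be the harmonicity of $v$ as a map into $X'$ rather than into $X_*$. The isometric identification from Lemma \ref{lem:fin-apt} is only between the neighborhood $B_r(u_*(0))\cap C_0$ and a neighborhood in $T_{u_*(0)}C_0$; it is not an isometry of the ambient spaces. To handle this, I would argue that minimality is a local property. For competitors with the same boundary values, the projection onto the closed convex set $B_r(u_*(0))$ (convex in CAT(0)) does not increase energy, so competitors may be assumed to stay in the ball. A neighborhood of the cone point in $T_{u_*(0)}X_*$ is isometric to a neighborhood of $u_*(0)$ in $X_*$, using a neighborhood of the apex in a finite subcomplex together with convexity. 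This lets me transfer the energy comparison, after possibly shrinking $\rho$ further.
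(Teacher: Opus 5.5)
Your outline matches the paper's, which justifies the lemma only through the discussion preceding it. That discussion rescales as in \eqref{eq:rescalings}, takes the Gromov--Schoen limit or the ultralimit of \cite{bdm}, and uses Remark \ref{rem:properties} together with intrinsic homogeneity. It then applies Lemma \ref{lem:fin-apt} to view $u_*$ near $0$ as a map into a finite cone in $T_{u_*(0)}X_*$, where cone dilation gives Definition \ref{def:ext-hom}. One small slip: $\Sigma_{u_*(0)}X_*$ is modelled on $(\Sp^{N-1},W)$ with $N=\dim X$, not on $(\Sp^1,W)$.

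The step you add beyond the paper is minimality of the transplanted map in $X'$, which the paper takes for granted. As written, this step fails for two reasons.

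\begin{itemize}
\item It is false that a neighborhood of the vertex of $T_{u_*(0)}X_*$ is isometric to a neighborhood of $u_*(0)$ in $X_*$ when $X_*$ is not locally finite, and that is exactly the case where you need it. If $X$ is an $\R$-tree branching at a dense set (e.g.\ a universal $\R$-tree), so is the ultralimit $X_*$. Then every ball about $u_*(0)$ contains other branch points, while $T_{u_*(0)}X_*$ is a star. Only the finite union $C_0$ is locally conical at $u_*(0)$.
\item Your projection argument needs $C=T_{u_*(0)}C_0$ to be convex in $X'$, and a finite union of apartments through the vertex generally is not. In a product of two trees, take product apartments $\ell_1\times\ell_2$ and $\ell_1'\times\ell_2'$ with each $\ell_i\cap\ell_i'$ equal to the vertex. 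The geodesic between generic points of the two passes through points of $\ell_1'\times\ell_2$ lying in neither apartment.
\end{itemize}

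So you have no energy-nonincreasing way to push a competitor in $X'$ back into the region where comparison with $u_*$ is available.

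The repair is to convexify before passing to the tangent cone.

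\begin{itemize}
\item By \cite[Corollary 4.6.8]{kleiner-leeb}, the closed convex hull $K$ of $C_0\cap\overline{B_1(u_*(0))}$ is still covered by finitely many (closed) apartments.
\item For small $r$, the covering apartments that miss $u_*(0)$ do not meet $B_r(u_*(0))$. So, as in the last step of the proof of Lemma \ref{lem:fin-apt}, the logarithm map at $u_*(0)$ is an isometry from the convex set $K\cap\overline{B_r(u_*(0))}$ onto a set $D\subseteq X'$.
\item Because geodesics in $X'$ are unique, $D$ is closed and convex. It contains the image of $u_*|_{B_\rho}$ for $\rho$ small.
\item Given a competitor $w$ for $v$ in $X'$, dilate it back to $B_\rho$; energies on both sides scale by $\rho^{2\alpha}$. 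Compose with the nearest-point projection onto $D$, which is $1$-Lipschitz and fixes the boundary values. Then pull back to $K\subseteq X_*$.
\item Minimality of $u_*$ on $B_\rho$ then yields $E(v)\le E(w)$.
\end{itemize}

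With this substitution, the rest of your argument goes through.
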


In light of this, to prove Theorem \ref{thm:ord-disc}, it suffices to prove the following.

\begin{theorem}\label{thm:hom-ord-disc}
    If $u:B_1(0)\subset\R^2\to X$ is a nonconstant homogeneous harmonic map of order $\alpha$, and $X$ is a conical Euclidean building of type $W$, then $\alpha=\frac mk$ for $m,k\in\N$ where $k$ divides $|W|$.  If $X$ is dimension $1$ (a tree or $\R$-tree), then in fact $\alpha=\frac m2$ for some natural number $m\geq2$.
\end{theorem}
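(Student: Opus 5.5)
The plan follows the outline in the introduction: first prove a dichotomy for homogeneous maps, then classify the orders in the second case by a spherical billiards argument.

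\emph{Setup.} By Lemma \ref{lem:fin-apt} and Lemma \ref{lem:reduce-to-hom}, assume $u(B_1(0))$ lies in a finite simplicial cone $C=\bigcup_{i=1}^M A_i$ of apartments through $0_X$. Write $u(r,\theta)=r^\alpha\,\gamma(\theta)$, where $\gamma:\Sp^1\to X_C$ is the restriction to the circle. On the regular set, $u$ is locally a harmonic map into $\R^N$, so each coordinate is a homogeneous harmonic function of degree $\alpha$. Locally each coordinate therefore has the form $r^\alpha(a\cos\alpha\theta+b\sin\alpha\theta)$, and $\gamma$ satisfies $\gamma''+\alpha^2\gamma=0$ in any chart. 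By Theorem \ref{thm:haus-bound}, the singular set is discrete. Since it is also conical, singular points occur only at $0$, so $\gamma$ is smooth away from finitely many bad angles. The domain variation (Hopf differential) $\varphi=\langle u_z,u_z\rangle\,dz^2$ is holomorphic on $B_1\setminus\{0\}$ and is homogeneous of degree $2\alpha-2$. Because $u$ has finite energy, $\varphi=c\,z^{2\alpha-2}dz^2$ must be single valued. So either $c\neq 0$, which forces $2\alpha\in\Z$, or $c=0$.

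\emph{Dichotomy.} Expanding, $\langle u_z,u_z\rangle$ is proportional to $z^{-2}r^{2\alpha}\bigl(\alpha^2|\gamma|^2-|\gamma'|^2-2i\alpha\langle\gamma,\gamma'\rangle\bigr)$. This gives two cases.
\begin{enumerate}
\item If $c\ne0$, then $2\alpha=m\in\N$. Combined with $\alpha\ge1$, this gives $\alpha=\frac m2$ with $m\ge2$.
\item If $c=0$, then $\langle\gamma,\gamma'\rangle=0$ and $|\gamma'|=\alpha|\gamma|$. So $|\gamma|$ is constant, and $\gamma$ is a curve of constant distance from $0_X$. It is then a closed curve in the link $\Sigma_{0}X$ parametrized with speed $\alpha$. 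In each chart it is a great circle arc, since $\gamma''=-\alpha^2\gamma$ with $|\gamma|$ constant.
\end{enumerate}
In the tree case the link is discrete, so a nonconstant curve cannot have constant distance; case (2) is impossible there, and the rank-$1$ statement follows. In general, $\frac m2$ already has denominator dividing $|W|$, because $W$ contains a reflection and hence has even order.

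\emph{Billiards step (main obstacle).} In case (2), $\gamma$ is a closed, locally geodesic-in-apartments curve in the spherical building $\Sigma_0X$. Its total length is $2\pi\alpha$. Project it by the canonical map $\theta_\Sigma$ of Remark \ref{rem:deltamod}. The projection is a spherical billiard trajectory in $\Deltamod=\Sp^{N-1}/W$, reflecting off the walls.

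The difficulty is at the singular angles. There $\gamma$ passes from one apartment to another, and we must show that the projected path still unfolds to a single great circle in $\Sp^{N-1}$. Concretely, the two one-sided germs of $\gamma$ at a wall crossing must make angle exactly $\pi$ in the link. To prove this, I would use harmonicity at the singular ray: a first-variation (balancing) argument, perturbing $u$ along that ray, should show that $\gamma$ is a local geodesic of $\Sigma_0X$. That is exactly the angle-$\pi$ condition.

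Granting this, lift the projected curve via the $W$-action. Unfolding the billiard gives a great circle $\tilde\gamma$ in $\Sp^{N-1}$. After traversing length $2\pi\alpha$, the curve returns to the same point of $\Deltamod$ with the same direction. Hence the lift closes up up to an element $w\in W$, namely $\tilde\gamma(2\pi\alpha)=w\,\tilde\gamma(0)$, with $w$ preserving the circle's direction. Then $w$ acts on the great circle as a rotation by $2\pi\alpha$ mod $2\pi$.

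Since $w^k=1$ for $k=\mathrm{ord}(w)$, which divides $|W|$, we get $2\pi k\alpha\in2\pi\Z$. Therefore $\alpha=\frac mk$ with $k\mid|W|$.
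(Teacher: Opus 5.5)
Your proof is essentially the paper's, with the Hopf differential as a complex repackaging of the paper's computation in Theorem \ref{thm:flat-dichotomy}. In a chart where $u=r^\alpha(\mathbf v_1\cos\alpha\theta+\mathbf v_2\sin\alpha\theta)$ one finds
\[
\langle u_z,u_z\rangle=\tfrac{\alpha^2}{4}\bigl(\|\mathbf v_1\|^2-\|\mathbf v_2\|^2-2i\,\mathbf v_1\cdot\mathbf v_2\bigr)z^{2\alpha-2}.
\]
So chart-independence of $\varphi$ is the paper's chart-independence of the coefficients of $q'$, and single-valuedness of $cz^{2\alpha-2}$ is its periodicity argument. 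That single-valuedness comes from $\varphi$ being globally defined on $B_1\setminus\{0\}$, not from finite energy. Your closing step is Lemma \ref{lem:sph-bill}, slightly streamlined because you use the period $2\pi\alpha$ directly rather than the minimal period.

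The one thing to correct is the step you call the main obstacle: it does not exist, and no first-variation or balancing argument is needed. You yourself note that homogeneity and Theorem \ref{thm:haus-bound} give $\mathcal{S}(u)\subset\{0\}$. Hence there are no bad angles on $\Sp^1$, and your sentence about $\gamma$ being smooth away from finitely many bad angles contradicts this.

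Every $\theta$ therefore has a neighborhood mapped into a single apartment. There, as you already showed in case (2), $\gamma$ is a great-circle arc in that apartment's link $\Sp^{N-1}$. A wall crossing happens inside this one apartment, so the two one-sided germs automatically make angle $\pi$. After reparametrizing to unit speed, $\theta_\Sigma\circ\gamma$ is locally $\overline\theta$ of a great circle. This is precisely the paper's Lemma \ref{lem:gammadef}.

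With this in place of the step you grant, your argument is complete. In the tree case you should also note that a constant $\gamma$ is ruled out by $|\gamma'|=\alpha|\gamma|$, which would force $u\equiv 0_X$.
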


\section{First steps towards discreteness}\label{sec:discord}

In this section, we establish an important dichotomy for homogeneous harmonic maps from surfaces.

\begin{theorem}\label{thm:flat-dichotomy}
    For a nonconstant homogeneous harmonic map $u:B_2(0)\subset\R^2\to X$ of order $\alpha$, where $X$ is a conical Euclidean building, one of the following holds:
    \begin{enumerate}
        \item either $\alpha=\Ord(0)=\frac{m}{2}$ for some $2\leq m\in\N$, or
        \item $d_X(u(\phi),u(0))$ is constant for $\phi\in\mathbb{S}^1$.
    \end{enumerate}
\end{theorem}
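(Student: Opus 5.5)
The plan is to write $u$ in polar coordinates as $u(r,\phi)=r^\alpha h(\phi)$, where $h=u|_{\Sp^1}$ and the scaling is about the cone point $0_X=u(0)$, and to study the single function
\[
g(\phi):=d_X^2(u(\phi),0_X),\qquad\text{so that}\qquad d_X^2(u(r,\phi),0_X)=r^{2\alpha}g(\phi).
\]
The dichotomy should come from showing that $g$ obeys one linear ODE on all of $\Sp^1$, and then reading off the periodicity constraint.

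First I would show there are no singular points on $\Sp^1$. Since $u$ is extrinsically homogeneous, the singular set $\Si(u)$ is invariant under dilations about $0$. If $\phi\in\Sp^1$ were singular, the whole ray through $\phi$ would be singular, which contradicts Theorem \ref{thm:haus-bound}, since that set has Hausdorff dimension $0$. Hence every $\phi\in\Sp^1$ is regular.

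Next I would establish the local form of $g$. Fix $\phi_0$, and choose a small arc $I\ni\phi_0$ such that $u$ maps a neighborhood of $I$ into a single apartment $A$. I want to arrange that $A$ contains $0_X$. By homogeneity, $u$ of the sector $\{r\phi:\phi\in I,\ r>0\}$ lies in the cone from $0_X$ over $u(I)$. As in the proof of Lemma \ref{lem:fin-apt}, \cite[Corollary 4.6.8]{kleiner-leeb} covers this cone by finitely many apartments through $0_X$. The union of these is a finite simplicial cone, and near a regular point the harmonic map must locally lie in one of them. This is the step I expect to be the main obstacle. I would handle it by shrinking $I$: $u$ is continuous, the pieces are closed, and for a harmonic map into a finite union of flats, agreement with a flat on an open set propagates by analyticity. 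Once $A\ni 0_X$, I identify $A\cong\R^N$ with $0_X=0$, so that the distance to $0_X$ is the Euclidean norm. On the sector, $u$ is then a homogeneous harmonic $\R^N$-valued map of degree $\alpha$, so each coordinate has the form $r^\alpha(a_i\cos\alpha\phi+b_i\sin\alpha\phi)$. Summing the squares gives
\[
g(\phi)=A_I+B_I\cos 2\alpha\phi+C_I\sin 2\alpha\phi\quad\text{on } I .
\]
Equivalently, on each such arc $g$ is smooth and satisfies $g'''+4\alpha^2g'=0$.

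Finally I would globalize. The arcs $I$ cover $\Sp^1$, $g$ is globally defined and continuous, and on overlaps the local expressions coincide. Hence $g$ is a smooth solution of $g'''+4\alpha^2 g'=0$ on every arc. Lifting to $\R$ and using uniqueness of solutions to this linear ODE, I get $g(\phi)=A+R\cos(2\alpha(\phi-\phi_1))$ for all $\phi\in\R$, with global constants $A$, $R\ge0$ and $\phi_1$, and with $g$ being $2\pi$-periodic. There are then two cases.
\begin{enumerate}
\item If $R=0$, then $d_X(u(\phi),u(0))=\sqrt{A}$ is constant on $\Sp^1$, which is alternative (2).
\item If $R>0$, periodicity forces $4\pi\alpha\in2\pi\mathbb Z$, so $\alpha=\frac m2$. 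Since $\alpha=\Ord(0)\ge1$ by the monotonicity discussion in Section \ref{subsec:order}, we get $m\ge2$, which is alternative (1).
\end{enumerate}
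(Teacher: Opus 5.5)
Your outline follows the paper's proof: every point of $\Sp^1$ is regular by dilation invariance of $\Si(u)$ and Theorem \ref{thm:haus-bound}; in an apartment with the cone point at the origin, $u$ has the local form $r^\alpha(a\cos\alpha\phi+b\sin\alpha\phi)$; this gives a local trigonometric expression for $d^2(u(\phi),0_X)$; and a periodicity argument on all of $\Sp^1$ finishes. Your uniqueness argument for $g'''+4\alpha^2g'=0$ is equivalent to the paper's comparison of the coefficients of $q'$ on overlaps.

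The step that does not work as written is the one you flagged. Take an arbitrary finite family of apartments $A_1,\dots,A_M$ through $0_X$ covering the cone over $u(I)$. A regular point need not have a neighborhood mapped into a single $A_j$, and neither shrinking $I$ nor analyticity fixes this. For example, let $X=T\times\R$ with $T$ a tripod with legs $a,b,c$, and let $u(x,y)=(x,y)\in(a\cup b)\times\R$. The family $\{(a\cup c)\times\R,\,(b\cup c)\times\R\}$ covers $X$. Yet at $\phi_0=\pi/2$ the image crosses the wall $\{0_T\}\times\R$, so no neighborhood of $\phi_0$ lands in either apartment. Analyticity of $u$ in $A$ only propagates containment in an affine subspace. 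Here, by contrast, $A\cap A_j$ is a half-apartment, so $u^{-1}(A\cap A_j)$ is a closed half-plane.

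The detour is unnecessary. The apartment $A$ given by regularity already contains $0_X$, because in a conical building every apartment passes through the cone point. This is exactly what the paper invokes when it fixes coordinates on all apartments with $0_X$ at the origin. One way to see it:
\begin{enumerate}
\item The dilation $\delta_t$ about $0_X$ carries a flat $F$ to a flat $\delta_tF$ with the same ideal boundary, so the two flats are parallel.
\item If $0_X\notin F$, then $d(0_X,\delta_tF)=t\,d(0_X,F)$, so $F$ and $\delta_tF$ are distinct for $t\neq1$.
\item The flat strip (sandwich) lemma would then give an isometric copy of $\R^N\times[0,d]$ with $d>0$ inside an $N$-dimensional building, which is impossible.
\end{enumerate}
The same fact gives $\delta_t(A)=A$, which is what actually justifies the dilation invariance of $\Si(u)$ in your first step. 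With this replacement your argument is complete.
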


\begin{proof}
   By homogeneity, it will suffice to understand the behavior of $u|\partial B_{1}(0)$ (or indeed any circle enclosing the origin). Further, homogeneity and Theorem \ref{thm:haus-bound} imply that $\mathcal S(u) \subset \{0\}$.
    Thus, for each $x\in \partial B_{1}(0)$, there is a ball $B_{\rho(x)}(x)$ on which $u$ is regular, that is, so that $u(B_{\rho(x)}(x))\subseteq A$ for some apartment $A$ of $X$. Because $X$ is conical, we may choose to fix coordinates on {\em all} apartments of $X$ so that $0\in A$ is the cone point $0_X$ in $X$. Working in such coordinates, and regarding $w=u|B_{\rho(x)}(x):B_{\rho(x)}(x)\to A$ as a regular harmonic map, $w$ satisfies the harmonic map equation in polar coordinates:
    \[
    0=\frac1{r^2}\frac{\partial^2w}{\partial\theta^2}+\frac1r\frac{\partial w}{\partial r}+\frac{\partial^2w}{\partial r^2}.
    \]
    Writing $w(r,\theta)=r^\alpha g(\theta)$ and setting $r=1$, we obtain a differential equation for $g$,
    \begin{equation}\label{eq:g-diff-eq}
    0=\frac{\partial^2g}{\partial\theta^2}(\theta)+\alpha^2g(\theta)
    \end{equation}
    so that $g(\theta)={\bf v}_1\cos(\alpha\theta)+{\bf v}_2\sin(\alpha\theta)$ for some vectors ${\bf v}_1,{\bf v}_2\in A$, where {\em a priori} this representation $g$ is valid only on a neighborhood of the point $x=(1,\theta_x)$, possibly depending on the choice of the point $x\in\partial B_1(0)$ and the apartment $A$. To analyze the original map $u$ more closely, we want to determine which features of $g(\theta)$ actually depend on these choices.

    The quantity $q(\theta):=d^2(u(e^{i\theta}),u(0))$ is a well-defined function on $\mathbb S^1$ without reference to $A$ or $x$, but we can also locally express it in terms of $g$, as $q(\theta)=\|g(\theta)\|^2$.  Hence for each $\theta\in\Sp^1$, we may locally compute $q'(\theta)$ using $g$ as follows
    \[
    q'(\theta)=2g(\theta)\cdot g'(\theta)=\alpha(\|{\bf v}_2\|^2-\|{\bf v}_1\|^2)\sin(2\alpha\theta)-2\alpha({\bf v}_1\cdot{\bf v}_2)\cos(2\alpha\theta).
    \]
    This representation reveals that $q(\theta)$ is differentiable (and indeed smooth) at each $x\in\Sp^1$.  Moreover, if $A_1,A_2$ are two apartments so that for an interval $I\subseteq\Sp^1$, $u(I)\subset A_1\cap A_2$, we may compare the representations of $u$ in these apartments.  Suppose that for all $\theta\in I$, 
    \[    u(\theta)={\bf v}_1\cos(\alpha\theta)+{\bf v}_2\sin(\alpha\theta)={\bf w}_1\cos(\alpha\theta)+{\bf w}_2\sin(\alpha\theta)
    \]
    where ${\bf v}_1,{\bf v}_2\in A_1$ and ${\bf w}_1,{\bf w}_2\in A_2$.  Computing $q'(\theta)$ for $\theta\in I$, and comparing coefficients on $\cos(2\alpha\theta)$ and $\sin(2\alpha\theta)$, we conclude that
    \begin{align*}
    \|{\bf v}_2\|^2-\|{\bf v}_1\|^2&=\|{\bf w}_2\|^2-\|{\bf w}_1\|^2,\\
    {\bf v}_1\cdot{\bf v}_2&={\bf w}_1\cdot{\bf w}_2.
    \end{align*}
    That is, that the quantities $\|{\bf v}_2\|-\|{\bf v}_1\|$ and ${\bf v}_1\cdot{\bf v}_2$ are independent of the particular representation $g$ and thus are independent of our original choice of $x$.

    Thus, the representation  
    \[
    q'(\theta)=\alpha(\|{\bf v}_2\|^2-\|{\bf v}_1\|^2)\sin(2\alpha\theta)-2\alpha({\bf v}_1\cdot{\bf v}_2)\cos(2\alpha\theta)
    \]
    is valid for all $\theta\in\Sp^1$.  Now, since $q$ is a smooth function on all of $\mathbb S^1$, unless $\|{\bf v}_2\|^2-\|{\bf v}_1\|^2={\bf v}_1\cdot{\bf v}_2=0$, we conclude immediately that $2\alpha$ is an integer.  On the other hand, if $\|{\bf v}_2\|^2-\|{\bf v}_1\|^2={\bf v}_1\cdot{\bf v}_2=0$, we directly see that $q'(\theta)=0$ for all $\theta$, so that $q(\theta)$ is constant.
\end{proof}

In case (1) of \ref{thm:flat-dichotomy}, $u$ already satisfies the desired discreteness result for the order, so we turn to case (2). To close out this section, we show that when case (2) holds, the {\em order} of a homogeneous map $u$ is closely related to the {\em length} of $u(\Sp^1)$.

\begin{lemma}\label{lem:hom-len-ord}
    Suppose that $u: B_1(0)\subset \R^2\to X$ is a homogeneous harmonic map of order $\alpha$, where $X$ is a conical Euclidean building, and $u(\Sp^1)$ is a curve at constant distance $L$ from $u(0)$.  Then $\ell(u(\Sp^1))=2\pi\alpha L$.
\end{lemma}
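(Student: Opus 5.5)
We work locally, exactly as in the proof of Theorem \ref{thm:flat-dichotomy}. Around each point $x=e^{i\theta_x}\in\Sp^1$ choose a ball on which $u$ is regular and an apartment $A$ containing its image, with coordinates in which $0_X$ is the origin. Homogeneity then gives $u(e^{i\theta})=g(\theta)={\bf v}_1\cos(\alpha\theta)+{\bf v}_2\sin(\alpha\theta)$ on a small interval $I$ around $\theta_x$. Since $I$ is small, the curve $u(I)$ lies in the single flat $A$, so its length there equals its length in $X$. This holds because $A$ is isometrically embedded and geodesically convex, and length is computed from the distance $d_X$, which restricts to the Euclidean metric on $A$. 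Hence locally the speed of the curve is $\|g'(\theta)\|$, and it suffices to show $\|g'(\theta)\|\equiv\alpha L$.

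The key computation uses the hypothesis of case (2). We have $\|g(\theta)\|^2=q(\theta)=L^2$ for all $\theta$, so the coefficients of $\cos(2\alpha\theta)$ and $\sin(2\alpha\theta)$ must vanish. Expanding $\|g\|^2=\tfrac12(\|{\bf v}_1\|^2+\|{\bf v}_2\|^2)+\tfrac12(\|{\bf v}_1\|^2-\|{\bf v}_2\|^2)\cos(2\alpha\theta)+({\bf v}_1\cdot{\bf v}_2)\sin(2\alpha\theta)$ on an interval forces $\|{\bf v}_1\|=\|{\bf v}_2\|=L$ and ${\bf v}_1\cdot{\bf v}_2=0$; this is consistent with the proof of Theorem \ref{thm:flat-dichotomy}. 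Then $g'(\theta)=\alpha(-{\bf v}_1\sin(\alpha\theta)+{\bf v}_2\cos(\alpha\theta))$. By the orthogonality and equal norms, $\|g'(\theta)\|^2=\alpha^2L^2$, so the speed is constantly $\alpha L$, independent of the local chart. In other words, $u|_{\Sp^1}$ is locally a constant-speed parametrization of an arc of a great circle of radius $L$ in the plane spanned by ${\bf v}_1,{\bf v}_2$.

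Finally, we globalize. Cover $\Sp^1$ by finitely many such intervals (by compactness) and take a partition into consecutive subarcs, each inside one chart interval. Length is additive over this partition, and each subarc of angular length $\Delta\theta$ contributes exactly $\alpha L\,\Delta\theta$. Summing gives $\ell(u(\Sp^1))=2\pi\alpha L$, where $\ell$ denotes the length of the parametrized curve $u|_{\Sp^1}$, counted with multiplicity.

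The only delicate point is justifying that the metric length in $X$ of a piece of the curve lying in one apartment equals its Euclidean length there. This follows from the fact that charts $\iota_\alpha$ are isometric embeddings, so $d_X$ restricted to $A$ is Euclidean. The rest is the elementary calculation above.
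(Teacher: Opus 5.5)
Your proposal is correct and follows the same route as the paper. Both work in an apartment through $0_X$, write $u$ locally as ${\bf v}_1\cos(\alpha\theta)+{\bf v}_2\sin(\alpha\theta)$ with ${\bf v}_1\perp{\bf v}_2$ and $\|{\bf v}_1\|=\|{\bf v}_2\|=L$, get constant speed $\alpha L$, and sum over a finite cover of $\Sp^1$. Your derivation of the orthogonality and equal norms from $\|g\|^2\equiv L^2$, and your remark that $\ell$ means the length of the parametrized curve counted with multiplicity, fill in points the paper leaves implicit.
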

\begin{proof}
    It suffices to show that for any point $x_0\in\Sp^1$, there is an interval $I=[a,b]\subset\Sp^1$ containing $x_0$ so that $u(I)$ is an interval of length $\ell(I)\alpha L$.

    To see this, recall that any such $x_0$ is a regular point of $u$, so there is a neighborhood $V$ of $x_0$ and an apartment $A$ so that $u$ has the representation
    \[
    u(r,\theta)=r^\alpha\sin(\alpha\theta)\bf{v}_1+r^\alpha\cos(\alpha\theta)\bf{v}_2
    \]
    where $\bf{v}_1$ and $\bf{v}_2$ are orthogonal vectors of length $L$ in $A$.  If $I\subseteq\Sp^1\cap V$ is an interval of length $\ell(I)$ containing $x_0$, direct computation of the arc length reveals that $u(I)$ is a curve of length $\ell(I)\alpha L$.
\end{proof}

\section{Spherical Billiards}\label{sec:sphbill}

In this section, we connect the problem of finding the length of $u(\Sp^1)$ to a spherical billiards problem in $\Deltamod$ (cf. Remark \ref{rem:deltamod}). First, we show that case (2) of the previous section's dichotomy implies that $u(\Sp^1)\subseteq\Sigma_{u(0)}X$.

\begin{lemma}\label{lem:finreduct}
    If $u: B_2(0)\subset \R^2\to X$ is a homogeneous harmonic map so that $u(\Sp^1)$ is at a constant distance to $u(0)$, there exist $\rho, \beta>0$ so that $ \beta  u(\rho\Sp^1)$ maps isometrically into $\Sigma_{u(0)}X$, the space of directions at $u(0)$.

\end{lemma}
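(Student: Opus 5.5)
The plan is to combine Lemma \ref{lem:fin-apt} with the cone structure of $T_{u(0)}X$ and then rescale radially. Since $X$ is conical with cone point $0_X=u(0)$, the tangent cone $T_{u(0)}X$ is canonically identified with $X$ itself. Equivalently, every $y\neq 0_X$ in $X$ can be written uniquely as $([\gamma_y],t)$ with $t=d(y,0_X)$. Here $[\gamma_y]\in\Sigma_{u(0)}X$ is the germ of the unique geodesic ray from $0_X$ through $y$; uniqueness of this ray is guaranteed by conicality, as noted after Definition \ref{def:ext-hom}.

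First, I apply Lemma \ref{lem:fin-apt}. Intrinsic homogeneity follows from extrinsic homogeneity, since radial curves are geodesics from the cone point. The lemma gives $\rho>0$ such that $u(B_\rho(0))$ lies isometrically in a finite simplicial cone $C\subseteq T_{u(0)}X$. By homogeneity, $u(\rho\Sp^1)$ lies at the constant distance $L_\rho=\rho^\alpha L>0$ from the cone point, where $L>0$ because $u$ is nonconstant. Set $\beta=1/L_\rho$. Scaling by $\beta$ about the cone point, $\beta u(\rho\Sp^1)$ lies in the unit sphere $\{t=1\}$ of the cone, which is a copy of $\Sigma_{u(0)}X$ via $([\gamma],1)\mapsto[\gamma]$.

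Next, I show that this identification is isometric in the sense the paper needs, namely that it preserves the length structure of the curve. This is the one point requiring care. In a Euclidean cone $C(\Sigma)$, the cone metric on the unit sphere is
\[
d\bigl(([\gamma_1],1),([\gamma_2],1)\bigr)=2\sin\bigl(\tfrac12\min(\angle(\gamma_1,\gamma_2),\pi)\bigr),
\]
so it is only bi-Lipschitz to the angle metric, not equal to it. The induced length metrics do agree, however, and rectifiable curves have the same length in both. I would state and use the conclusion as an isometric identification of the unit sphere of the cone, with its induced length metric, with $\Sigma_{u(0)}X$; lengths of curves are what Section \ref{sec:sphbill} needs.

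Concretely, I would verify this apartment by apartment. Inside the finite complex $C$, each apartment $A_i$ through $0_X$ is a copy of $\R^N$ whose unit sphere is a round $\Sp^{N-1}$ embedded in $\Sigma_{u(0)}X$, with angles given by Euclidean angles at $0_X$. On a round sphere, the chordal and great-circle distances produce the same curve lengths. By regularity, every subarc of $u(\rho\Sp^1)$ lies locally in a single apartment, as in the proof of Lemma \ref{lem:hom-len-ord}, so the length comparison is local and covers the whole curve. The main obstacle is justifying that the germ map is well defined and compatible across overlapping apartments. This follows because apartment charts differ by elements of $W_{\mathrm{aff}}$ fixing $0_X$, which are linear isometries; these preserve both radial rays and angles, so the local identifications glue.
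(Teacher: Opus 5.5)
Your proposal is correct and follows the same route as the paper: use Lemma \ref{lem:fin-apt} to place $u(B_\rho(0))$ in $T_{u(0)}X$, use homogeneity and the constant-distance hypothesis to put $u(\rho\Sp^1)$ on a sphere of fixed radius about the cone point, and rescale by the reciprocal of that radius to land on the unit sphere $\{([\gamma],1)\}\cong\Sigma_{u(0)}X$. Your two additions are accurate refinements of points the paper's proof passes over: nonconstancy is needed to make the radius positive, and the restricted cone metric on the unit sphere is the chordal metric $2\sin(\angle/2)$, so ``isometrically'' holds only for the induced length metric, which is all that Lemma \ref{lem:gammadef} actually uses.
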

\begin{proof}
By Lemma \ref{lem:fin-apt}, or by local finiteness, there exists a $\rho>0$ so that $u(B_\rho(0))$ is isometric to a subset of $T_{u(0)}X$. Thus by homogeneity and the fact that $u(\Sp^1)$ is at constant distance to $u(0)$, $d(u(0), u(\rho/2,\theta))=\lambda$ for some constant $\lambda>0$. Thus $u(\rho/2,\theta) =(\lambda,[\gamma_\theta])$ for germs $[\gamma_\theta]\in\Sigma_{u(0)}X$ and we may consider $\frac 1 \lambda u|\partial B_\rho(0)$ as a map into $\Sigma_{u(x_0)}X$.
\end{proof}

The previous lemma implies that if $u$ is a homogeneous harmonic map with $d(u(\Sp^1),u(0))$ a constant, then after possibly rescaling the domain and target, we may regard $u|\Sp^1$ as a map into $\Sigma_{u(0)}X$. We now consider properties of this map.

\begin{lemma}\label{lem:gammadef}
Let $\rho,\beta>0$ be as in Lemma \ref{lem:finreduct}.    
    For $\gamma:\Sp^1 \to \Sigma_{u(x_0)}X$ such that 
    \begin{equation}
    \gamma(\theta):=\beta u(\rho, \theta) ,
    \end{equation}
   the following hold:
   \begin{enumerate}
   \item $\gamma$ is a local geodesic with constant speed $\alpha$.
   \item $\ell(\gamma(\Sp^1))=2\pi \alpha$.
   \end{enumerate}
\end{lemma}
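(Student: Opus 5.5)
The plan is to work locally on $\Sp^1$, using the regular representation from the proof of Theorem \ref{thm:flat-dichotomy}, and then pass to the space of directions via the radial projection furnished by Lemma \ref{lem:finreduct}.

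\textbf{Local form of $\gamma$.} Fix $\theta_0\in\Sp^1$. Since $\mathcal S(u)\subset\{0\}$, the point $\rho e^{i\theta_0}$ is regular. So there is an apartment $A$ through the cone point $0_X$ and an interval $I\ni\theta_0$ on which
\[
u(\rho,\theta)=\rho^\alpha\bigl({\bf v}_1\cos(\alpha\theta)+{\bf v}_2\sin(\alpha\theta)\bigr).
\]
Because we are in case (2), the proof of Theorem \ref{thm:flat-dichotomy} gives $\|{\bf v}_1\|=\|{\bf v}_2\|$ and ${\bf v}_1\cdot{\bf v}_2=0$. Write $L$ for this common length, so that $\rho^\alpha L$ is the constant distance $d(u(\rho,\theta),0_X)$. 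Inside $A\cong\R^N$, the curve $\theta\mapsto u(\rho,\theta)$ is therefore an arc of a great circle in the $2$-plane spanned by ${\bf v}_1,{\bf v}_2$. It lies on the sphere of radius $\rho^\alpha L$ about $0_X$ and is traversed at angular speed $\alpha$.

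\textbf{Passing to $\Sigma_{u(0)}X$.} The space of directions of $A$ at $0_X$ is a round unit sphere $\Sp^{N-1}$, isometrically embedded in $\Sigma_{u(0)}X$ as an apartment of the spherical building (Remark \ref{rem:deltamod}). The radial projection $x\mapsto x/|x|$ sends $u(\rho,\cdot)|_I$ to $\theta\mapsto\cos(\alpha\theta)\hat{\bf v}_1+\sin(\alpha\theta)\hat{\bf v}_2$, which is a unit-speed great-circle arc reparametrized by $\alpha\theta$. This projection is exactly the normalization $\beta=1/(\rho^\alpha L)$ used in Lemma \ref{lem:finreduct}. Hence $\gamma|_I$ is a geodesic of the round sphere with speed $\alpha$.

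Next I would check that a geodesic of an apartment is a local geodesic of $\Sigma_{u(0)}X$. The angle metric restricted to an apartment agrees with the spherical metric, and apartments are convex for segments of length $<\pi$. So on short subintervals $\gamma$ realizes distance in $\Sigma_{u(0)}X$. Since these intervals cover $\Sp^1$, $\gamma$ is a local geodesic of constant speed $\alpha$, which proves (1).

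\textbf{Length.} For (2), length is additive and is computed locally. Each interval $I$ contributes $\alpha\,\ell(I)$, so $\ell(\gamma(\Sp^1))=2\pi\alpha$. Alternatively, this follows directly from Lemma \ref{lem:hom-len-ord} after scaling by $\beta$, since rescaling the cone by $\beta$ scales lengths on the sphere of radius $\rho^\alpha L$ to lengths on the unit sphere.

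The main obstacle is the identification step: making sure the isometric identification in Lemma \ref{lem:finreduct} agrees with the spherical metric of a chosen apartment. Concretely, the angle between two directions in the same apartment must equal their Euclidean angle, and local geodesicity in one apartment must imply local geodesicity in the whole building. I would handle this by appealing to the structure of $\Sigma_pX$ as a spherical building (Remark \ref{rem:deltamod}, \cite{kleiner-leeb}), whose apartments are isometrically embedded convex round spheres.
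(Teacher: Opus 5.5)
Your proposal is correct and follows essentially the same route as the paper: use regularity of $u$ away from the origin to write $u(\rho,\theta)=\rho^\alpha\left(\cos(\alpha\theta){\bf v}_1+\sin(\alpha\theta){\bf v}_2\right)$ in an apartment through $0_X$, note that after normalization this is a great-circle arc in $\Sigma_{u(0)}A\simeq\Sp^{N-1}$ traversed at speed $\alpha$, and add up lengths locally to get $2\pi\alpha$. Beyond the paper's argument, you justify two points it takes for granted: that ${\bf v}_1,{\bf v}_2$ are orthogonal of equal length (from $q'\equiv 0$ in case (2)), and that apartment geodesics are local geodesics of $\Sigma_{u(0)}X$ (by $\pi$-convexity of apartments); you also track $\beta=1/(\rho^\alpha L)$ explicitly rather than implicitly taking $L=1$.
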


\begin{proof}
Given $x \in \Sp^1$, there exists $\delta>0$ and an apartment $A_x$ of $X$ such that $u(B_\delta(x)) \subset A_x$. By homogeneity, $\beta u( B_\rho(0))\cap A_x$ lies in a plane containing $0_X$, so $\beta u(\rho \Sp^1)\cap A_x$ lies in a great circle of $\Sigma_{u(0)}A_x\simeq \Sp^{N-1}$. Thus, $\gamma|(B_\delta(x)\cap \Sp^1)$ maps into $\Sigma_{u(0)}X$ as a geodesic. Since this is true for all $x\in \Sp^1$, the map $\gamma$ is a {local geodesic}. Moreover, in local coordinates within each apartment we may orthonormal vectors $\bf{v}_1,\bf{v}_2$, such that in a neighborhood of $x \in \Sp^1$
\[
\gamma(\theta)= \beta \rho^\alpha(\cos(\alpha \theta) {\bf v}_1+ \sin(\alpha \theta) {\bf v}_2).
\] Noting that $\beta \rho^\alpha =1$ by construction, we see that $\gamma$ has constant speed $\alpha.$ Clearly, the length of its image must be $2\pi \alpha$.
\end{proof}
Presume $X$ is a building of type $W$. Let $\theta_{\Sigma}:\Sigma_{u(0)}X\to\Sp^{N-1}/W=:\Deltamod$, denote the canonical anisotropy map, analogous to the natural map $\overline \theta:\Sp^{N-1}\to\Deltamod$ which quotients out by the action of $W$.  
Consider the behavior of the map 
\[
\overline \gamma:=\theta_\Sigma\circ \gamma \circ t: [0,2\pi \alpha] \to \Deltamod,
\]where $t:[0,2\pi \alpha] \to [0,2\pi]$ such that $t(\theta):= \theta/\alpha$. While in the interior of $\Deltamod$, $\overline\gamma$ traces out a geodesic of unit speed, and when it is incident to a wall in a transverse direction, it reflects off with angle of incidence equal to angle of return.  Notice also that $\overline \gamma(0)= \overline \gamma(2\pi\alpha)$ so we may consider $\overline \gamma:\R \to \Deltamod$ as the obvious periodic map.

We now consider a wider class of maps with the same domain, target, and incidence behavior.

\begin{definition}
    We say that $c:\R\to\Deltamod$ is a {\bf closed billiards path of period $\lambda>0$ in $\Deltamod$} if:
    \begin{enumerate}
        \item For each $x\in\R$, $c$ is a ``unit-speed local geodesic" at $x$ in the sense that there is an interval $I\ni x$ and a unit speed geodesic $\widetilde{c}:I\to\Sp^{N-1}$ so that $c|I=\overline\theta\circ\widetilde{c}$, and
        \item $c$ is $\lambda$-periodic in the sense that
        \[
        c(x)=c(x+\lambda)
        \]and there does not exist $\overline \lambda \in (0,\lambda)$ such that $c(x)= c(x + \overline \lambda)$.
    \end{enumerate}
\end{definition}
\begin{remark}
    Condition (1) can also be phrased (slightly less generally) as the condition that $\gamma$ moves along geodesics on the interior of $\Deltamod$, and reflects off of the walls when it meets them.
\end{remark}
\begin{remark}
        A path satisfying only condition (1) of this definition may reasonably be called a {\bf billiards path in $\Deltamod$}, but in fact every billiards path in $\Deltamod$ is closed; the distinction is relevant in more irregular spherical domains.
\end{remark}
Because the extension of $\overline \gamma$ is a closed billiards path which is $\frac{2\pi\alpha}m$-periodic for some $m \in \mathbb N$, to classify the possible orders of harmonic maps into $(X,W)$, it suffices to classify the possible values of $\lambda$.

\begin{lemma}\label{lem:sph-bill}
    If $c$ is a closed billiards path of period $\lambda$, then there exist $j,k \in \mathbb N$ such that $\lambda = \frac{2\pi j}{k}$ where $k$ divides $|W|$.
\end{lemma}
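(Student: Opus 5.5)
Lift the billiards path to the sphere and use finiteness of $W$. By condition (1), around every $x\in\R$ there is a local lift of $c$ to a unit-speed great-circle arc in $\Sp^{N-1}$. Fix one such local lift $\widetilde c$ near $0$ and extend it to the full great circle $\widetilde c:\R\to\Sp^{N-1}$, which is $2\pi$-periodic. The first claim is that $c=\overline\theta\circ\widetilde c$ on all of $\R$.

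To prove the claim, let $T$ be the supremum of times $t$ for which $c=\overline\theta\circ\widetilde c$ on $[0,t]$. Near $T$, condition (1) gives another local lift $\widehat c$, a unit-speed geodesic arc. On the overlap interval, $\overline\theta\circ\widehat c=\overline\theta\circ\widetilde c$, so for each time in the overlap the two points $\widehat c(s)$ and $\widetilde c(s)$ lie in the same $W$-orbit. For each $w\in W$, the set of $s$ with $\widehat c(s)=w\widetilde c(s)$ is closed, and these finitely many sets cover the overlap. By Baire, one of them contains a subinterval. Two unit-speed great-circle arcs that agree on an interval agree identically, so $\widehat c=w\widetilde c$ on the whole overlap domain. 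Hence $\overline\theta\circ\widehat c=\overline\theta\circ\widetilde c$ beyond $T$ as well, contradicting maximality. The same argument runs backwards in time, proving the claim. This continuation step is the delicate point. It is cleanest to do by this finite-union/identity-theorem argument for great circles rather than by tracking reflections at walls, since $c$ may hit lower-dimensional faces of $\Deltamod$ where "reflection" is ambiguous.

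From the claim, $c$ is $2\pi$-periodic, since $\widetilde c$ is. By the minimality in condition (2), $\lambda$ is the least positive period of $c$, and the set of periods of the continuous nonconstant map $c$ is a closed subgroup of $\R$ equal to $\lambda\mathbb Z$. Therefore $2\pi=j\lambda$ for some $j\in\N$, i.e. $\lambda=2\pi/j$. This already has the required form $\frac{2\pi j'}{k}$ (with $j'=1$, $k=j$), but we also need $k$ to divide $|W|$, so more information is required.

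For the divisibility, use the group structure. Since $c(s+\lambda)=c(s)$, the same identity-theorem argument shows that $\widetilde c(s+\lambda)=w\,\widetilde c(s)$ for a single $w\in W$ and all $s$. Then $w$ preserves the great circle $\widetilde c$ and acts on it as the rotation by $\lambda$. It follows that $\widetilde c(s+\lambda\,\mathrm{ord}(w))=\widetilde c(s)$, so $\lambda\,\mathrm{ord}(w)\in 2\pi\mathbb Z$. Write $\lambda=\frac{2\pi j}{k}$ with $k=\mathrm{ord}(w)$ and $j=\lambda k/2\pi\in\N$. By Lagrange's theorem, $\mathrm{ord}(w)$ divides $|W|$, which finishes the argument. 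In fact, combined with the previous paragraph, $\lambda=2\pi/j$, so the minimal period relation forces $j\mid k$, giving a consistent conclusion.
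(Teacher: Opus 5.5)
Your proposal is correct and follows the paper's argument: lift $c$ to a unit-speed great circle, find $w\in W$ with $\overline c(s+\lambda)=w\cdot\overline c(s)$ for all $s$, iterate $\mathrm{ord}(w)$ times to get $\mathrm{ord}(w)\lambda\in 2\pi\mathbb Z$, and conclude with Lagrange's theorem. Your finite closed cover plus identity-theorem argument justifies the global lift and the relation $\overline c(\cdot+\lambda)=w\cdot\overline c$ more carefully than the paper does; the paper asserts that a lift is determined by $\overline c(0)$, which fails when $\overline c(0)$ lies on a wall. Your middle paragraph on the subgroup of periods is not needed, but it is correct.
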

\begin{proof}
Covering the sphere $\Sp^{N-1}$ by $|W|$ copies of $\Deltamod$, we lift $c$ to $(\Sp^{N-1},W)$. Formally this amounts to finding a geodesic $\overline c:\R \to \Sp^{N-1}$ such that $ c:= \overline \theta \circ\overline c$. Note that while many such lifts exist, prescribing $\overline c(0)$ completely determines the lifted map.

Choose one such lift $\overline{c}$ and note that because $c(0)=c(\lambda)$, there is some $w\in W$ so that $\overline{c}(\lambda)=w\cdot\overline{c}(0)$. Let $\widetilde c(s):=\overline c(s + \lambda)$. Then $\widetilde c, w \cdot \overline c$ are both lifts of $c$ and $\widetilde c(0) = w \cdot \overline c(0)$. It follows that $\widetilde c = w \cdot \overline c$ and thus $w \cdot \overline c([0,\lambda])= \overline c([\lambda,2\lambda])$.

Moreover, $\overline c(2\lambda)= w \cdot \overline c(\lambda)$. Let $k$ be the order of $w$. Then, repeating this process $k$ times we find that $\overline c(k\lambda)= w^k \cdot \overline c(0)= \overline c(0)$. That is, $\overline c:[0,k\lambda]\to \Sp^{N-1}$ is a closed geodesic of unit speed (with multiplicity $j$). Therefore, $k \lambda = 2\pi j$ where $k$ divides $|W|$.
\end{proof}

We now prove Theorem \ref{thm:hom-ord-disc}, completing the proof of Theorem \ref{thm:ord-disc}.

\begin{proof}[Proof of Theorem \ref{thm:hom-ord-disc}]

    Let $u: B_1(0)\to X$ be a nonconstant homogeneous harmonic map of order $\alpha$.  By Theorem \ref{thm:flat-dichotomy} one of the following holds:
    \begin{enumerate}
        \item $\alpha=\frac{m}{2}$ for some $m\in\N$, $m\geq2$.  Because $W$ is a reflection group, $2$ divides $|W|$, so in this case we are done.
        \item There is a constant $c$ so that for any $\phi\in\Sp^1$, $d(u(0),u(\phi))=c$.
    \end{enumerate}
    
We now consider only case (2). Assume first that $N=1$. Since the image of $u$ lies in a finite cone, the image is isometric to a subset of a $k$-pod, with cone point $u(0)$.  Hence, if $d(u(0),u(\phi))$ is constant for $\phi\in\Sp^1$, $u(\Sp^1)=\{p\}$ for some $p\in X$. Consider $x\mapsto d^2(u(x),u(0))$, which vanishes on $\Sp^1$. By \cite[Proposition 2.2]{gromov-schoen} the map is subharmonic and thus vanishes on $ B_1(0)$. In particular, $u$ is a constant function, contrary to our assumption.
    
    Now suppose $N \geq 2$. By Lemma \ref{lem:gammadef} and the discussion following, $\lambda = \frac{2\pi \alpha}m$. And by Lemma \ref{lem:sph-bill}, $\lambda = \frac{2\pi j }k$ where $k$ divides $|W|$. It follows that $\alpha = \frac {mj}k$ where $k$ divides $|W|$.
\end{proof}

\section{Remarks}\label{sec:rem}

In extending this proof to domain dimensions higher than $2$, several difficulties arise. We may still study homogeneous maps to resolve our problem, but these maps become more complicated. First, the analogous result to \ref{thm:flat-dichotomy} will be more involved in this context, because knowing the ``radial" behavior of a map $u$ will no longer uniquely determine the ``tangential" behavior within each apartment. Instead, for a homogeneous map $u:\R^m\to X$ of order $\alpha$, we learn that $u|\Sp^{m-1}$ is an eigenfunction of the spherical Laplacian with eigenvalue $-\alpha(\alpha+m-2)$ in neighborhoods of regular points. Moreover, there are technical aspects to be overcome---in domain dimension $\geq3$, it is possible for singular points of a homogeneous map $u$ to exist on the boundary of the domain.

To close out this note, we discuss the case where $X$ is a tree, and offer a question related to studying the possible orders of harmonic maps into trees.

In the case where $u:\Omega\to X$ is a homogeneous map into a tree, by \cite{sun}, $u$ maps into a $k$-pod for some $k$ even in the non-locally finite setting. Consider, as we have done here, the map $v:=u|\Sp^{m-1}:\Sp^{m-1}\to X$.  For each of the (open) legs of the $k$-pod, $L_1,\dots,L_k$, the preimage $S_i:=v^{-1}(L_i)$ is an open subset of $\Sp^{m-1}$, and $v(\partial S_i)=0_X$ for each $i$, where $0_X$ denotes the ``vertex" of the $k$-pod, the common boundary of all of the legs. Within each $S_i$, $v$ is an eigenfunction of the spherical Laplacian with eigenvalue $-\alpha(\alpha+m-2)$. We hence arrive at the following question.

\begin{question}
    For a fixed $m$, for which values of $\alpha\geq1$ do there exist finitely many open domains $S_1,\dots,S_k$ in $\Sp^{m-1}$ so that the following hold? \begin{enumerate}\item $\Sp^{m-1}=\bigcup_{i=1}^k\overline{S_i}$, \item $-\alpha(\alpha+m-2)$ is a Dirichlet eigenvalue of the spherical Laplacian in each $S_i$.\end{enumerate}
\end{question}

In the case $m=2$, a direct analysis of this question leads to an analogous result to Theorem \ref{thm:flat-dichotomy}---the only such $\alpha$ are $\frac m2$, for natural numbers $m\geq2$. We hence conjecture that the possible orders of harmonic maps into trees are precisely the $\alpha$ answering the previous Question. This leaves open the question of how to approach more general Euclidean buildings of dimension $>1$.

\end{document}